\newtheorem{theorem}{Theorem}[section]
\newtheorem{thm}[theorem]{Theorem}
\newtheorem{lem}[theorem]{Lemma}
\newtheorem{cor}[theorem]{Corollary}
\theoremstyle{definition}
\newtheorem{defin}[theorem]{Definition}
\theoremstyle{remark}
\newtheorem{rem}[theorem]{Remark}
\numberwithin{equation}{section}
\newcommand{\FF}[1]{\mathbb F_{#1}}
\newcommand{\CF}[1]{\overline{\mathbb F}_{#1}}
\newcommand{\CC}{\mathbb C}
\newcommand{\irr}{\textup{Irr}}
\newcommand{\sbs}{\subseteq}
\newcommand{\nor}{\vartriangleleft}
\newcommand{\End}{\textup{End}}
\newcommand{\id}{\textup{id}}
\newcommand{\ul}{\underline}
\newcommand{\ol}{\overline}
\newcommand{\sym}{\operatorname{Sym}}
\newcommand{\msupp}{\operatorname{MinSupp}}
\newcommand{\supp}{\operatorname{Supp}}
\newcommand{\Fix}{\operatorname{Fix}}
\newcommand{\mr}{\operatorname{mr}}
\newcommand{\Gal}{\operatorname{Gal}}
\newcommand{\soc}{\operatorname{Soc}}
\renewcommand{\phi}{\varphi}
\begin{document}
\title{Random bases for coprime linear groups} 

\author{H\"ulya Duyan} \address{Department of Mathematics, Central
  European University, N\'ador utca 9., H-1051, Budapest, Hungary}
\email{duyan\_hulya@phd.ceu.edu}

\author{Zolt\'an  Halasi} 
\address{ Department of Algebra and Number Theory,
  E\"otv\"os University, P\'azm\'any P\'eter s\'et\'any 1/c, H-1117,
  Budapest, Hungary \and Alfr\'ed R\'enyi Institute of Mathematics,
  Hungarian Academy of Sciences, Re\'altanoda utca 13-15, H-1053,
  Budapest, Hungary\\
  ORCID: \url{https://orcid.org/0000-0002-1305-5380}
} 
\email{zhalasi@cs.elte.hu and halasi.zoltan@renyi.mta.hu}

\author{K\'aroly Podoski}
\address{Alfr\'ed R\'enyi Institute of Mathematics,
  Hungarian Academy of Sciences, Re\'altanoda utca 13-15, H-1053,
  Budapest, Hungary}
\email{podoski.karoly@renyi.mta.hu}

\keywords{coprime linear group, random base}
\subjclass[2010]{20P05, 20C30.}
\thanks{The work of the second and third authors on the project leading
  to this application has received funding from the European Research
  Council (ERC) under the European Union's Horizon 2020 research and
  innovation programme (grant agreement No 741420). 
  The second author was also supported by the J\'anos
  Bolyai Research Scholarship of the Hungarian Academy of Sciences 
  and  by the National
  Research, Development and Innovation Office (NKFIH) Grant
  No.~K115799.}
\maketitle
\begin{abstract}
  The minimal base size $b(G)$ for a permutation group $G$, is a
  widely studied topic in the permutation group theory.  Z.~Halasi and
  K.~Podoski \cite{HP} proved that $b(G)\leq 2$ for coprime linear
  groups. Motivated by this result and the probabilistic method used
  by T.~Burness, M.W.~Liebeck and A.~Shalev, it was asked by L.~Pyber
  \cite{pype} that for coprime linear groups $G\leq GL(V)$, whether
  there exists a constant $c$ such that the probability of that a
  random $c$-tuple is a base for $G$ tends to 1 as $|V|\to\infty$.
  While the answer to this question is negative in general, it is
  positive under the additional assumption that $G$ is even primitive
  as a linear group. In this paper, we show that almost all
  $11$-tuples are bases for coprime primitive linear groups.
\end{abstract}

\section{Introduction}
For a finite permutation group $G$, a subset $B$ of $\Omega$ is called
 a base for $G$, if the pointwise stabilizer of $B$ in $G$ is trivial. 
The concept of base plays a fundamental role in the development 
of the algorithms for permutation groups and these algorithms are 
significantly faster if the size of the base is small 
(see the book of \'A.~Seress \cite{sepe}).
The minimal size of a base for $G$ acting on $\Omega$ is denoted by $b(G)$.
L.~Pyber (\cite{pyas}) showed that there exists a universal constant $c>0$
such that almost all subgroups $G$ of $\sym(n)$ satisfy that $b(G) >cn$.

On the other hand, there are several important group classes where the
minimal base size $b(G)$ can be bounded by a fixed constant $c$.
\'A.~Seress \cite{Seress} showed that $b(G) \leq 4$ for a solvable
primitive group $G$.  For an almost simple primitive permutation group
$G$, a famous conjecture of P.~J.~Cameron and W.~M.~Kantor \cite{caka}
states that there exists an absolute constant such that $b(G)\leq c$
for all non-standard primitive permutation groups $G$.  In
\cite{cape}, P.~J.~Cameron suggested that $c$ can be chosen to $6$ apart
from the Mathieu group $M_{24}$ with its natural action, where the
minimal base size is $7$.  The Cameron--Kantor conjecture was proved
by M.~W.~Liebeck and A.~Shalev in \cite{lisa}.  However, this was an
existence result for $c$, using probabilistic method without yielding
any explicit value for this constant.  Finally, T.~C.~Burness, M.~W.~Liebeck
and A.~Shalev \cite{buli} proved that if $G$ is a finite almost simple
group in a primitive faithful non-standard action, then $b(G)\leq 7$,
with equality if and only if G is the Mathieu group $M_{24}$ in its
natural action of degree 24.

Furthermore, they proved that if G is a finite almost simple group,
and $\Omega$ is a primitive faithful non-standard $G$-set then the
probability that a random 6-tuple in $\Omega$ is a base for G tends to
1 as $|\Omega|\to\infty$. For a finite vector space $V$, a linear
group $G\leq GL(V)$ is called coprime, if $(|G|,|V|)=1$.  D.~Gluck and
K.~Magaard \cite{glma} proved that for such a group the minimal base
size of $G$ is bounded by an absolute constant that is $b(G)\leq 94$.
Z.~Halasi and K.~Podoski \cite{HP} improved this result by showing
that $b(G)\leq 2$ and this estimation is sharp.  Based on this result
and the random base result of T.~C.~Burness, M.~W.~Liebeck and
A.~Shalev, L.~Pyber \cite{pype} asked whether for a coprime linear
group $G\leq GL(V)$ there exists an absolute constant $c$ such that a
random $c$-tuple in $V$ is a base for $G$ tends to 1 as
$|V|\to\infty$.  We answer this question affirmatively by showing that
\begin{thm}
  Let $V$ be a finite vector space and 
  $G\leq GL(V)$ be a coprime primitive linear group, i.e. $(|G|,|V|)=1$.
  Then the probability that a random $11$-tuple in $V$ is a base for
  $G$ tends to 1 as $|V|\to\infty$.
 \end{thm}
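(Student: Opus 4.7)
The plan is to apply the probabilistic method initiated by Liebeck and Shalev and used in this setting by Burness--Liebeck--Shalev in \cite{buli}. For $g\in G$ let $\Fix(g)=\ker(g-1)\le V$ denote its $1$-eigenspace on $V$. A uniformly random vector $v\in V$ is fixed by $g$ with probability $|\Fix(g)|/|V|$, so by independence of the coordinates and the union bound
\[
\Pr\bigl[(v_1,\ldots,v_{11})\text{ is not a base}\bigr]\le\sum_{1\ne g\in G}\Bigl(\frac{|\Fix(g)|}{|V|}\Bigr)^{11},
\]
and the task reduces to showing this quantity tends to $0$ as $|V|\to\infty$. The coprimality hypothesis $(|G|,|V|)=1$ enters at the outset: it forces every $g\in G$ to act semisimply on $V$, so that $\dim\Fix(g)<\dim V$ whenever $g\ne 1$, and in particular no individual term exceeds $q^{-11}$ where $q=|\FF{q}|$.

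To control the sum, I would invoke the structure of coprime primitive linear groups, classifying them by the socle of $G/Z(G)$. The regimes one expects are: (a) the semilinear case, where $G$ lies inside the normalizer of a Singer cycle in $GL(n,q)$, giving $|G|\le n(q^n-1)$ and $\dim\Fix(g)\le n/2$ for $g\ne 1$; (b) the extraspecial/symplectic-type case, where a normal $r$-subgroup of symplectic type acts irreducibly, so that $|G|/|Z(G)|$ is polynomially bounded in $n$ and fixities are controlled by the extraspecial structure; (c) a tensor-induced case $V=V_1\otimes\cdots\otimes V_k$, where one reduces to the primitive factors together with the permutation action on the tensor factors; and (d) the almost simple case, where $\soc(G/Z(G))$ is a non-abelian simple group. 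In cases (a)--(c) one has explicit bounds on $|G|$ combined with a uniform upper bound strictly below $1$ on $\dim\Fix(g)/\dim V$ for $g\ne 1$, and the $11$th power comfortably absorbs the sum.

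The main obstacle is the almost simple case (d). There one needs, for every $k<n$, a sharp upper bound on $N_k:=|\{g\in G:\dim\Fix(g)=k\}|$ so that $\sum_{k<n} N_k\,q^{-11(n-k)}\to 0$. The necessary inputs are: (i) character-theoretic bounds on conjugacy class sizes in finite simple groups of Lie type (Liebeck--Shalev style), translating class data into fixity estimates; (ii) Landazuri--Seitz lower bounds on the minimal degree of a faithful cross-characteristic representation, which rule out the most dangerous small-dimensional accidents and are especially clean under the coprimality assumption; and (iii) separate control of ``almost-scalar'' elements, which have very large fixity but few conjugacy classes. The constant $11$ emerges from balancing these three contributions uniformly across all infinite families of finite simple groups; showing the bound holds \emph{uniformly}, rather than merely generically, in each family is the technical heart of the argument.
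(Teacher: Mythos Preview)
Your overall framework---the union bound $\sum_{g\ne 1}(|\Fix(g)|/|V|)^c$ together with a structure theorem for maximal coprime primitive linear groups and a case split on the socle of $H/Z$---is exactly what the paper does. The case division into semilinear, extraspecial, tensor, and almost-simple pieces matches the paper's Theorem~\ref{thm:generalcase}.

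Where you diverge from the paper, and where your proposal becomes too vague to be a proof, is in the almost-simple case. You propose to stratify by $k=\dim\Fix(g)$ and bound $N_k$ using Liebeck--Shalev conjugacy-class estimates. The paper does \emph{not} do this: instead it uses Gluck's bounds on character ratios $|\chi(g)|/\chi(1)$ to produce a uniform lower bound on $\msupp_V(G)=\min_{g\ne 1}\supp(g)$ (Lemma~\ref{lem:msupp-mr_connection}, Theorem~\ref{thm:bound_to_mr}), and then couples this with an inequality $\log_q|G|\le 5\cdot\msupp_V(G)$ (Theorem~\ref{thm:quasisimple}). After a factor-of-two loss in passing from the generalised Fitting subgroup $N$ to $H$ via the commutator inequality $\supp([g,h])\le 2\supp(g)$ (Lemma~\ref{lem:commutator_supp}), this yields the requirement $c>10$, whence the constant $11$. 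So the constant does not come from ``balancing three contributions'' across simple families as you suggest, but from a single clean inequality.

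There is also a genuine omission in your sketch. For $G_0\simeq A_m$ acting on the nontrivial component of its natural permutation module, the ratio $\dim\Fix(g)/\dim V$ is \emph{not} bounded away from $1$: a $3$-cycle has support $2$ regardless of $m$, so $\msupp_V(G)$ is bounded while $|G|$ is not. Your approach via Landazuri--Seitz and ``almost-scalar'' elements does not address this; nor would class-size bounds alone, since the dangerous elements are far from central. The paper handles this case by an entirely separate, elementary counting argument (Theorem~\ref{thm:sym_perm}): identifying $V^c$ with $m\times c$ matrices and observing that a base corresponds to $m$ pairwise distinct rows in $\FF q^c$, which happens with probability at least $1-1/m^{c-2}$ once $q>m$. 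You need to isolate and treat this case explicitly; without it the argument does not close.
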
 
 In fact, we also give lower bounds for this probability in terms of
 the base field and the dimension of $V$. Our bounds highly depends on
 the structure of $G$. As a part of our argument, we give a general
 structure theorem for maximal coprime primitive linear groups in Theorem
 \ref{thm:generalcase}.  

 For any positive integer $c$ let us define the probability
 \[
 Pb(c,G,V):=P(\textrm{random }v_1,\ldots,v_c\in V\textrm{ is a base for }G).
 \]
 The main goal of this paper is to prove the following 
\begin{thm}\label{thm:randombase}
  Let $V$ be an $n$-dimensional vector space over the finite field
  $\FF q$ and let $G\leq GL(V)$ be a coprime primitive linear group.
  Then for any $c\geq 11$, the probability $Pb(c,G,V)$ is close to zero
  if $|V|$ is large enough.  More precisely, one of the following
  holds.
  \begin{enumerate}
    \item $Pb(c,G,V)\geq 1-\frac{3}{q^{(\frac{c}2-5)\sqrt{n}}}$;
    \item There is an $\FF q^k$ vector space 
      structure on $V$ for some field extension 
      $\FF q^k\geq \FF q$ (possibly $k=1$)
      and a tensor product decomposition $V=V_1\otimes_{\FF q^k} U$ over 
      $\FF q^k$ with $1\leq \dim(U)< \dim(V_1)\leq n/k$ such that 
      $G\leq \Gamma L(\FF q^k,n/k)$ and $H=G\cap GL((\FF q^k,n/k)$
      preserves this tensor product decomposition. 
      Furthermore, $H=H_1\otimes H_2$ with $H_1\leq GL(V_1)$, $H_2\leq GL(U)$
      are absolutely irreducible linear groups, 
      and $S_1=\soc(H_1/Z(H_1))$ is a non-Abelian simple group. 
      \begin{enumerate}
        \item If $S_1$ is not an alternating group, then
          \[Pb(c,G,V)\geq 1-\Big(\frac{1}{q^{(c-4)\sqrt{\dim(V)}}}+
          \frac{2}{|V|^{(c-10)/80}}\Big);\]
        \item If $S_1\simeq A_m$ for some $m$ and 
          $V_1$ is not an irreducible component of the natural permutation 
          $\FF q^k A_m$-module, then 
          \[Pb(c,G,V)\geq 1-\frac{3}{q^{\frac{c-10}{16}\sqrt{\dim(V)}}};\]
        \item If $S_1\simeq A_m$ for some $m$ and $V_1$ is the
          non-trivial irreducible component of the natural permutation
          $\FF q^k A_m$-module, then
          \[Pb(c,G,V)\geq 1-\frac{3}{n^{c-2}}.\]
      \end{enumerate}
  \end{enumerate}
\end{thm}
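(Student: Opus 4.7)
My starting point is the standard union bound
\[
1 - Pb(c,G,V) \;\leq\; \sum_{1\neq g\in G}\Bigl(\frac{|\Fix_V(g)|}{|V|}\Bigr)^c
 \;=\; \sum_{1\neq g\in G} q^{-c\,\rank_{\FF q}(g-1)},
\]
together with the monotonicity $Pb(c,G,V)\geq Pb(c,M,V)$ whenever $G\leq M$, which lets me pass to a maximal coprime primitive linear overgroup and invoke Theorem~\ref{thm:generalcase}. Its two conclusions match exactly the dichotomy (1) vs.~(2) of the statement, and in each case the task is to bound the sum above.

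For case (1), the structure theorem is expected to supply both a uniform lower bound $\rank(g-1)\geq c_0\sqrt n$ for every non-identity $g\in G$, and a count of the shape $|\{g\in G:\rank(g-1)=r\}|\leq q^{\beta r}$ for an absolute $\beta$. Grouping the sum by the value $r=\rank(g-1)$ and summing a geometric series then collapses to the announced $3\,q^{-(c/2-5)\sqrt n}$, with the constant $5$ absorbing $\beta$ together with the geometric tail.

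Case (2) is the heart of the argument. With $V=V_1\otimes_{\FF{q^k}}U$ and $H=H_1\otimes H_2$, the quantity $\rank(h_1\otimes h_2-1)$ is only multiplicative when neither $h_i$ is scalar, and the dangerous elements are those trivial on one of the tensor factors. I would split the sum in $H$ according to the image of $g$ in $H_2/Z(H_2)$: the non-scalar part reduces to an estimate on the strictly smaller-dimensional space $U$, while the residual part, essentially $h_1\otimes z\,\id_U$, is a random-base problem for $H_1$ on $V_1$ in which tuples drawn from all of $V$ supply an extra $|V|/|V_1|$-style gain. For subcase (2a), non-alternating simple socle, the latter is controlled by fixed-point-ratio bounds of Liebeck--Shalev type, giving $|\Fix_{V_1}(h_1)|/|V_1|\leq|V_1|^{-\delta}$ for every non-identity $h_1$ with absolute $\delta$; taking $\delta=1/80$ is exactly what produces the $|V|^{-(c-10)/80}$ term. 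Subcase (2b) runs along the same lines using known minimal-support bounds (of order $\sqrt m$) for irreducible $A_m$-representations away from the natural permutation module.

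The main obstacle is subcase (2c), the deleted permutation module $V_1$ of dimension $m-1$ or $m-2$. Coprimality forces the characteristic $p$ to exceed $m$, but the dangerous classes (double transpositions and $3$-cycles in $A_m$) still have $\rank(g-1)$ equal to $2$ on $V_1$, hence only $2k\dim(U)$ on $V$. My plan is to enumerate these classes directly: there are $O(m^4)$ double transpositions and $O(m^3)$ three-cycles, each contributing $q^{-2ck\dim(U)}$; combining this with $q>m$ and converting $m$ to $n$ via $n=k(m-1)\dim(U)$ yields $3/n^{c-2}$ in the worst regime $k=\dim(U)=1$. Longer cycle types are strictly smaller since their fixed subspaces shrink. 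The most delicate point I anticipate, common to all subcases of (2), is the bookkeeping across the $\Gamma L/GL/Z$ layers: one must verify that enlarging $H$ to $G\leq\Gamma L$ by scalars and field automorphisms of $\FF{q^k}$ does not destroy the rank bounds on the linear part, and this is what ultimately forces the specific constants $5$, $4$, $10$, and $c-2$ appearing in the statement.
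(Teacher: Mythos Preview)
Your architecture---union bound, pass to a maximal coprime primitive overgroup, structure theorem, split the sum---is the paper's, but two of your central mechanisms do not close.

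First, the dichotomy (1)/(2) is not an output of Theorem~\ref{thm:generalcase}; it emerges only after the paper's three-layer split $G=(G\setminus H)\cup(H\setminus H_1)\cup H_1$, where $H=G\cap GL(d,q^k)$ and $H_1$ is the tensor factor acting on the largest component $V_1$. Case~(2) is precisely the situation in which $N_1/Z=\soc(H_1/Z)$ is a single non-abelian simple group with $d_1>\sqrt d$; everything else is case~(1). The first two layers are controlled uniformly in both cases: for $g\in G\setminus H$ one picks $z\in Z$ with $[g,z]\neq 1$ and uses the commutator inequality $\supp(g)\geq\tfrac12\supp([g,z])=\tfrac12 n$; for $g\in H\setminus H_1$ a tensor-support lemma gives $\supp_V(g)\geq k\prod_{j\neq i}d_j$, balanced against $\prod_i|H_i|\leq q^{2k\sum d_i}$. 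There is no level-set bound $|\{g:\rank(g-1)=r\}|\leq q^{\beta r}$ in the paper, nor is one available: already in $GL(n,q)$ the count of elements with $\rank(g-1)=r$ is of order $q^{r(2n-r)}$, and nothing in the coprimality hypothesis forces a subgroup to do better.

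Second, and more seriously, your plan for~(2a) does not give the claimed exponent. Granting $\msupp_{V_1}(H_1)\geq d_1/80$ (this is what Gluck's character-ratio estimate $\mr\leq 19/20$ yields via Lemma~\ref{lem:msupp-mr_connection} and the commutator lemma, not a Liebeck--Shalev fixed-point ratio), and combining only with the P\'alfy--Pyber bound $|H_1|\leq|V_1|^2$, one obtains
\[
\sum_{1\neq g\in H_1}q^{-c\,\supp_V(g)}\;\leq\;|H_1|\cdot |V|^{-c/80}\;\leq\;|V|^{\,2-c/80},
\]
which for $c=11$ equals $|V|^{149/80}$ and is worthless. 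The missing ingredient is Theorem~\ref{thm:quasisimple}: for the quasisimple $N_1$ in question one has $\log_q|H_1|\leq 5\cdot\msupp_V(N_1)$, a bound on the \emph{group order} in terms of the minimal support. Only with this, together with $\msupp_V(H_1)\geq\tfrac12\msupp_V(N_1)$, does one get $|H_1|\,q^{-c\,\msupp_V(H_1)}\leq q^{-(c/2-5)\msupp_V(N_1)}\leq|V|^{-(c-10)/80}$. The constant $10=2\times 5$ thus records Theorem~\ref{thm:quasisimple}, not $\Gamma L/GL/Z$ bookkeeping; the same order-versus-support inequality drives~(2b).

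For~(2c) your class-by-class enumeration can be made to work, but the paper sidesteps it: identifying $c$-tuples in the $m$-dimensional permutation module with $m\times c$ matrices over $\FF{q^k}$, a tuple is a base for $S_m$ exactly when its $m$ rows are pairwise distinct, an event of probability $\prod_{i<m}(1-i/q^{kc})>1-1/m^{c-2}$ since coprimality forces $q^k>m$. The deleted module and the passage to $A_m$ and to $Z\cdot G_0$ are then a one-line projection argument (Corollary~\ref{cor:sym_deleted_perm}).
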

\newpage
\begin{rem}\leavevmode
  \begin{enumerate}
  \item Let $Z=Z(GL(V))\simeq \FF q^\times$ denote the group of scalar
    transformations on $V$. If $G\leq GL(V)$ is a coprime linear group
    on $V$, then so is $GZ\geq G$ and we have $Pb(c,G,V)\geq
    Pb(c,GZ,V)$. Therefore, for the rest of this paper we will always assume
    that $G$ contains $Z$.
  \item The assumption ``primitive'' is necessary here. To see this,
    let $H\leq GL(n,q)$ be the group of all invertible diagonal
    matrices, so $H\simeq(\FF q^\times)^n$. Then $v_1,\ldots,v_c\in
    \FF q^n$ is a base for $H$ if and only if for each $1\leq i\leq n$
    the $i$-th component of some $v_j$ is non-zero. For any fixed $i$,
    this has probability $(1-1/q^c)$, so we have
    \[
    Pb(c,H,\FF q^n)
    =\Big(1-\frac{1}{q^c}\Big)^n,
    \]
    which is close to zero for any fixed $c$ and big enough $n$.  If
    $(q,n)=1$, then one can add the regular permutation action of
    $C_n$ on the components of $\FF q^n$ to get the coprime
    irreducible linear group $G=H\rtimes C_n\leq GL(n,q)$ satisfying
    $\lim_{n\to\infty}Pb(c,G,\FF q^n)=0$.
  \end{enumerate}
\end{rem}
\section{Bounds on $Pb(c,G,V)$ in terms of supports and character ratios}
In order to prove Theorem \ref{thm:randombase}, our primary tool will be 
the concept of support for elements of a linear group.
\begin{defin}\label{def:minsupp}
  For a linear group $G\leq GL(V)$ and a $g\in G$ the
  \emph{fixed-point space} and the \emph{support} of $g$ are defined as
  \[
  \Fix(g):=\{v\in V\,|\,g(v)=v\} \textrm{\quad and \quad}
  \supp(g):=\dim (V)-\dim (\Fix_V(g)).
  \]
  Furthermore, let the \emph{minimum support} of $G$ be defined as 
  \[\msupp(G):=  \min_{1\neq g\in G}\supp(g).
  \]
  We use the notation $\Fix_V(g),\supp_V(g)$ and $\msupp_V(G)$ if 
  we also want to highlight the vector space on which the group acts. 
\end{defin}
\begin{rem}
  If $G$ strictly contains $Z$, then $\msupp(G)$ equals
  \[
  \min_{g\in G\setminus Z}\big(\dim (V)-\max_{\lambda\in \FF q^\times}
  (\dim(\ker(g-\lambda\cdot \id_V)))\big).
  \]
\end{rem}
In order to get bounds for $\msupp_V(G)$ in case of $G\leq GL(V)$ is a
quasisimple coprime linear group, we will use results from character
ratios of complex irreducible characters of such groups.
\begin{defin}
  For a finite group $G$ and $\chi\in\irr(G)$ with $\chi(1)\neq 1$
  let us define the \emph{maximal character ratios}
  \[
  \mr(G,\chi):=\max_{g\notin Z(\chi)}\frac{|\chi(g)|}{\chi(1)}
  \textrm{\quad and\quad}
  \mr(G):=\max_{\chi\in\irr(G),\,\chi(1)\neq 1} \mr(G,\chi). 
  \]
  Clearly, $\mr(G)<1$ for every finite group $G$.
\end{defin}
The connection between minimal support and maximal character ratio is
described in the following Lemma.
\begin{lem}\label{lem:msupp-mr_connection}
  Let $V$ be an $n$-dimensional vector space over the finite field $\FF q$ and 
  let $G\leq GL(V)$ be a non-Abelian coprime irreducible linear group. 
  Then we have 
  \[
  \msupp_V(G)\geq \frac{\dim(V)}2\Big(1-\mr(G)\Big).
  \]

  Moreover, if  $\chi\in\irr(G)$ is 
  any irreducible component of the Brauer character associated to $V$, then 
  \[\msupp_V(G)\geq \frac{1}{2}\Big(\chi(1)-\max_{g\notin Z(\chi)}|\chi(g)|\Big).\]
\end{lem}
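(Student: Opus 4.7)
The plan is to combine two facts. First, since $(|G|,p) = 1$ with $p = \mathrm{char}(\FF q)$, every $g \in G$ acts semisimply on $V$, the eigenvalues of $g$ on $V \otimes_{\FF q} \overline{\FF}_q$ are roots of unity, and the Brauer character $\beta_V$ is a genuine ordinary $\CC$-character of $G$. Second, if $W$ is an absolutely irreducible constituent of $V \otimes_{\FF q} \overline{\FF}_q$ with Brauer character $\chi$, then the multiplicity $m$ of the eigenvalue $1$ of $g$ on $W$ satisfies the elementary bound $m \le (\chi(1) + |\chi(g)|)/2$, obtained by writing $\chi(g) - m$ as a sum of $\chi(1) - m$ non-unit roots of unity and applying the triangle inequality. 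Rearranging gives $\supp_W(g) \ge (\chi(1) - |\chi(g)|)/2$.

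The setup is: decompose $V \otimes_{\FF q} \overline{\FF}_q = W_1 \oplus \cdots \oplus W_k$ into Galois-conjugate absolutely irreducible summands, and let $\chi_j = \beta_{W_j}$ be the Galois conjugates of a fixed $\chi \in \irr(G)$. Then $\chi_j(1) = \chi(1)$, $Z(\chi_j) = Z(\chi)$, and $|\chi_j(g)| = |\chi(g)|$ for every $g \in G$. Because $G$ is non-Abelian and acts faithfully via the $\chi_j$, $\chi$ cannot be linear, so $\mr(G,\chi)$ is defined. Fixed-point subspaces commute with base change, so $\supp_V(g) = \sum_j \supp_{W_j}(g)$.

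Now fix $1 \ne g \in G$. If $g \notin Z(\chi)$, then the key estimate applied to each $W_j$ gives
\[
\supp_V(g) \ge \frac{1}{2}\sum_j\bigl(\chi(1) - |\chi_j(g)|\bigr) = \frac{1}{2}\bigl(\dim V - k|\chi(g)|\bigr),
\]
and combining with $|\chi(g)| \le \chi(1)\mr(G,\chi) \le \chi(1)\mr(G)$ and $k\chi(1) = \dim V$ yields the first inequality; keeping only the summand indexed by $\chi$ yields the second. If instead $g \in Z(\chi) \setminus \{1\}$, then $g$ acts on each $W_j$ as a scalar $\lambda_j$, and the $\lambda_j$ form a single Galois orbit; if any $\lambda_j = 1$ then all do, forcing $g$ to act trivially on $V$ and contradicting the faithfulness of $G \le GL(V)$. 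Hence $\Fix_V(g) = 0$, so $\supp_V(g) = \dim V$, which exceeds both claimed lower bounds.

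There is no real obstacle; the proof is a direct application of the triangle inequality inside the Brauer-character formalism. The only point requiring care is the case $g \in Z(\chi) \setminus \{1\}$, where the character-ratio bound degenerates to the trivial $|\chi(g)| = \chi(1)$: one must track the Galois conjugate scalars $\lambda_j$ individually and invoke faithfulness of $G \le GL(V)$ to conclude $\Fix_V(g) = 0$, rather than hoping $g$ sits in the center of $GL(V)$.
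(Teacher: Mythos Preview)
Your argument follows the same route as the paper's: decompose $V\otimes\CF q$ into Galois-conjugate absolutely irreducible summands and apply the triangle-inequality bound $\supp_W(g)\ge\tfrac12\bigl(\chi(1)-|\chi(g)|\bigr)$ on each summand, treating $g\in Z(\chi)\setminus\{1\}$ separately via faithfulness.

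One claim is false, however: $|\chi_j(g)|=|\chi(g)|$ need not hold, since Galois conjugation by a power of Frobenius does not in general preserve the absolute value of a sum of roots of unity (for instance $|1+\zeta_5|\ne|1+\zeta_5^2|$, which is relevant whenever $q\equiv 2\pmod 5$). Fortunately this is inessential. For the first inequality you can simply bound each $|\chi_j(g)|$ by $\chi_j(1)\,\mr(G)$ separately before summing; for the second, your ``keep only the summand indexed by $\chi$'' step already avoids the claim. The paper sidesteps the issue differently: since the Galois twist fixes the eigenvalue $1$, the quantities $\supp_{W_j}(g)$ are all equal, so it bounds just one of them and multiplies by~$k$.
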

\begin{proof}
  Let $\CF q$ be the algebraic closure of $\FF q$ and let
  $\ol{V}=V\otimes \CF q$ be the $\CF q G$-module arising from the
  $\FF q G$-module $V$. Let $\ol{V}=V_1\oplus\ldots\oplus V_t$ be the
  decomposition of $\ol V$ into irreducible $\CF q G$-modules. Then
  the corresponding representations $G\mapsto GL(V_i)$ form a single
  Galois conjugacy class by \cite[Theorem 9.21]{Ibook}, so
  $\supp_{V_i}(g)=\frac{1}{t}\supp_V(g)$ holds for every $g\in G$.
  Let $\chi_i:G\mapsto \CC$ be the irreducible Brauer character
  associated to $V_i$ for each $1\leq i\leq t$. Since $(q,|G|)=1$, we
  get $\chi_i\in \irr(G)$ by \cite[Theorem 15.13]{Ibook}. Furthermore, 
  \[
  \chi_i(1)=\dim(V_i)\textrm{\quad and \quad}
  [\chi_i{\langle g\rangle},1_{\langle g\rangle}]=\dim(\Fix_{V_i}(g)). 
  \]  

  For any $g\in G$ we have $\chi_1(g)=(\chi_1(1)-k)\cdot
  1+\varepsilon_1+\ldots+\varepsilon_k$ where $k=\supp_{V_1}(g)$ and
  $\varepsilon_1,\ldots,\varepsilon_k$ are $o(g)$-th root of unity.
  Then $|\chi_1(g)|\geq \chi_1(1)-2k=\chi_1(1)-2\supp_{V_1}(g)$ holds,
  so $2\msupp_{V_1}(G)\geq \chi_1(1)-\max_{g\notin
    Z(\chi_1)}|\chi_1(g)|$.  (Note that the assumption that $G$ is
  non-Abelian implies that the $\chi_i$ are non-linear
  characters. Furthemore, if $1\neq g\in Z(\chi_1)$, then
  $\supp_{V}(g)=\dim(V)$, so $\msupp_V(G)=\min_{g\notin
    Z(\chi_1)}\supp_V(g)$ must hold.)

  It follows that
  \begin{align*}
    2\msupp_V(G)&=
    2t\msupp_{V_1}(G)\geq t(\chi_1(1)-\max_{g\notin Z(\chi_1)}|\chi_1(g)|)\\
    &=t\chi_1(1)(1-\mr(G,\chi_1))\geq \dim(V)(1-\mr(G)).
  \end{align*}

  Now, the first inequality proves the second claim, while the second
  inequality proves the first claim.
\end{proof}
\begin{lem}\label{lem:P-Msupp}
  \[
  Pb(c,G,V) \geq 1-\sum_{1\neq g\in G}\frac{1}{q^{c\cdot\supp(g)}}
  \geq 1-\frac{|G|}{q^{c\cdot \msupp(G)}} \geq
  1-\frac{1}{|V|^{c(1-\mr(G))/2-2}}.
  \]
  In particular, $Pb(c,G,V)\geq 1-\frac{1}{|V|^\varepsilon}$
  for $c\geq \frac{4+2\varepsilon}{1-\mr(G)}$.
\end{lem}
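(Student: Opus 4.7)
The plan is to prove the three displayed inequalities in order and then perform a one-line arithmetic rearrangement for the ``in particular'' clause. All three inequalities are short, and the argument is a clean union-bound estimate powered by the two substantial inputs already in hand: Lemma \ref{lem:msupp-mr_connection} and the Halasi--Podoski base-size bound $b(G)\leq 2$ for coprime linear groups.

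First I would apply the union bound. A tuple $(v_1,\ldots,v_c)\in V^c$ fails to be a base precisely when some $1\neq g\in G$ fixes every $v_i$. For fixed $g$, the set of fixed vectors is the subspace $\Fix(g)$ of codimension $\supp(g)$, so a uniformly random $v\in V$ lies there with probability $q^{-\supp(g)}$. Since the $c$ coordinates are sampled independently, the probability that $g$ fixes the whole tuple is $q^{-c\cdot\supp(g)}$, and a union bound over $1\neq g\in G$ yields the first inequality. The second inequality is then immediate from the definition of $\msupp(G)$: every term in the sum satisfies $q^{-c\cdot\supp(g)}\leq q^{-c\cdot\msupp(G)}$, and there are fewer than $|G|$ terms.

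For the third inequality I would combine two ingredients. Lemma \ref{lem:msupp-mr_connection} gives $\msupp(G)\geq \dim(V)(1-\mr(G))/2$, hence $q^{c\cdot\msupp(G)}\geq |V|^{c(1-\mr(G))/2}$. For the numerator, the Halasi--Podoski bound $b(G)\leq 2$ cited in the introduction produces, via the injection $g\mapsto (gv_1,gv_2)$ for any base $\{v_1,v_2\}\subseteq V$, the estimate $|G|\leq |V|^2$. Dividing yields
\[
\frac{|G|}{q^{c\cdot\msupp(G)}}\ \leq\ \frac{|V|^2}{|V|^{c(1-\mr(G))/2}}\ =\ \frac{1}{|V|^{c(1-\mr(G))/2-2}},
\]
which is exactly the required bound.

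The ``in particular'' claim is pure algebra: the hypothesis $c\geq (4+2\varepsilon)/(1-\mr(G))$ rearranges to $c(1-\mr(G))/2-2\geq \varepsilon$, and the third inequality then gives $Pb(c,G,V)\geq 1-|V|^{-\varepsilon}$. I do not anticipate any real obstacle; the entire argument is a short chain of estimates. The only substantive point is the conjunction of Lemma \ref{lem:msupp-mr_connection} with the base-size bound $b(G)\leq 2$ --- either ingredient alone would leave the third inequality too weak for the intended applications.
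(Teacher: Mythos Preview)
Your proof is correct and follows essentially the same route as the paper: a union bound for the first inequality, the trivial estimate for the second, and the combination of Lemma~\ref{lem:msupp-mr_connection} with $|G|\leq |V|^2$ for the third. The only cosmetic difference is that you justify $|G|\leq |V|^2$ via the Halasi--Podoski bound $b(G)\leq 2$, whereas the paper simply writes $|V|^2$ (and elsewhere invokes the P\'alfy--Pyber bound \cite{P3-Pyber} for the same inequality); either citation works.
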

\begin{proof}
  \begin{multline*}
    P(\{v_1,\ldots, v_c\}\sbs V\textrm{ is not a base for }G)
    \leq \sum_{1\neq g\in G}P(g(v_i)=v_i,\;\forall \; 1\leq i\leq c)\\
    =\sum_{1\neq g\in G}\left(\frac{|\Fix(g)|}{|V|}\right)^c
    =\sum_{1\neq g\in G}\frac{1}{q^{c\cdot\supp(g)}}
    \leq \frac{|G|}{q^{c\cdot \msupp(G)}}\\
    \leq \frac{|V|^2}{(q^n)^{c(1-\mr(G))/2}}=\frac{1}{|V|^{c(1-\mr(G))/2-2}},
  \end{multline*}
  and the claim follows.
\end{proof}
\section{Bounds for character ratios and for 
  minimal supports of quasisimple linear groups}
The goal of this section is to give lower bounds for minimal supports
of coprime quasisimple groups $G\leq GL(V)$ in terms of $|G|$ and
$\dim(V)$.  

First we handle the case when $G$ is a sporadic group or a
finite quasisimple group of Lie type.  For such groups, we use bounds
for their maximal character ratios $\mr(G)$.
\begin{thm}\label{thm:bound_to_mr}
  Let $G$ be a finite quasisimple group such that $G/Z(G)$ is not an
  alternating group.
  \begin{enumerate}
  \item If $G/Z(G)$ is a sporadic simple group, then $\mr(G)<0.54$. 
  \item If $G=G(r)$ is a finite quasisimple group of Lie type over
    the field $\FF r$, then
    \[
    \mr(G)\leq
    \left\{
      \begin{array}{ll}
        \max\Big(\frac{1}{\sqrt r-1},\frac{9}{r}\Big)&\textrm{ if }r>9;\\
        \frac{19}{20}&\textrm{ if }r\leq 9.
      \end{array}
    \right.
    \]
  \end{enumerate}
\end{thm}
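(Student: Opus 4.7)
The plan is to handle the two cases separately, since they require fundamentally different tools.

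For part (1), the claim about sporadic groups can be verified by direct inspection. Every quasisimple cover $G$ of a sporadic simple group has a fully known character table via the ATLAS (and hence via GAP's character table library). For each such $G$ I would enumerate the non-linear irreducible characters $\chi \in \irr(G)$ and, for each, loop over the conjugacy class representatives $g \notin Z(\chi)$, compute the ratios $|\chi(g)|/\chi(1)$, and take the maximum. The resulting finite computation can be checked to stay strictly below $0.54$; the modest gap between this threshold and $\tfrac12$ absorbs the worst behaviour, which typically occurs in the smaller sporadic groups (Mathieu groups and the like) rather than in the Monster or Baby Monster.

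For part (2), I would split the argument according to the size of the base field $r$. When $r \leq 9$, there are only finitely many fields, so it is enough to invoke a classical uniform character-ratio bound of Gluck for finite quasisimple groups of Lie type, which gives $\mr(G) \leq 19/20$ (with the slack being more than enough to cover all small fields). When $r > 9$, I would combine two complementary bounds from the literature on character ratios of Lie-type groups: a Gluck-style estimate $|\chi(g)|/\chi(1) \leq 1/(\sqrt r - 1)$, valid for non-central $g$ and non-linear $\chi$ of sufficiently large degree (this comes out of standard Deligne–Lusztig estimates combined with known lower bounds on $\chi(1)$), together with the complementary bound $|\chi(g)|/\chi(1) \leq 9/r$, which handles the remaining low-degree characters such as Weil and Steinberg-type ones, as well as the low-rank families where the first bound is not sharp. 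Taking the maximum of the two over all admissible pairs $(\chi,g)$ yields the estimate in the statement.

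The main obstacle lies in part (2): one has to match the hypotheses of each character-ratio bound in the literature and verify that every pair $(\chi,g)$ with $\chi$ non-linear and $g \notin Z(\chi)$ is captured by at least one of the two estimates. Particular care is required for small rank $G$ (such as $PSL_2(r)$, $PSL_3(r)$), for the twisted families (Suzuki and Ree groups), and for characters of exceptionally small degree, where either the $1/(\sqrt r - 1)$ or the $9/r$ bound can be close to tight. The sporadic part, by contrast, is entirely routine but requires careful bookkeeping to ensure that no covering group or non-linear character is omitted from the check.
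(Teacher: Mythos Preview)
Your proposal is correct and matches the paper's approach essentially exactly: part (1) is a finite GAP/ATLAS computation over all covers of sporadic groups, and part (2) is a direct citation of Gluck's character-ratio bounds for groups of Lie type (the paper simply references Gluck \cite{G} and Liebeck's summary \cite[Theorem 2.4]{Liebeck} rather than unpacking the two regimes as you do). The only difference is that the paper treats Gluck's result as a black box, while you sketch the internal case split; your added discussion of low-rank and small-degree obstacles is therefore unnecessary here, since those issues are already absorbed into the cited result.
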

\begin{proof}
  We checked part (1) for the covering groups of the sporadic simple
  groups by using the GAP \cite{GAP} Character table library and also the
  undeposited GAP package FUtil to turn cyclotomic complex numbers
  into floating ones in order to be able to compare the values of
  $|\chi(g)|$ for various $g$ and $\chi$. 
  
  Regarding part (2), it is a simplified version of a result of Gluck 
  \cite{G}. (For a summary of his results, see also \cite[Theorem
  2.4]{Liebeck}). 
\end{proof}
\begin{rem}
  For simple groups of alternating type there is no general upper
  bound for $\mr(G)$ smaller than $1$. Moreover it can be shown that
  for every $\varepsilon>0$, the number of irreducible characters
  $\chi\in\irr(S_m)$ (or $\chi\in\irr(A_m)$) satisfying
  $\mr(S_m,\chi)>1-\varepsilon$ is not bounded if $m$ is large enough. 
\end{rem}
\begin{cor}\label{cor:msupp-dim_bound}
  Let $V$ be a vector space over the finite field $\FF q$ and let
  $G=Z\cdot G_0\leq GL(V)$ where $G_0$ is a coprime quasisimple
  irreducible linear group which is not of alternating type. 
  Then $\msupp_V(G)\geq \frac{1}{40}\dim(V)$.
\end{cor}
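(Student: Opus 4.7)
The plan is to deduce the bound from the inequality
$\msupp_V(G) \geq \tfrac{1}{2}\dim(V)\,(1-\mr(G))$
in Lemma \ref{lem:msupp-mr_connection} combined with the character ratio bounds of Theorem \ref{thm:bound_to_mr}. Since $\tfrac{1}{40} = \tfrac{1}{2}\cdot\tfrac{1}{20}$, the corollary is equivalent to proving $\mr(G) \leq 19/20$ in every case.

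The first step is to reduce the computation from $G = Z\cdot G_0$ to the quasisimple factor $G_0$. Because $Z$ consists of scalar transformations it lies in the center of $G$, so for any $\chi \in \irr(G)$ with $\chi(1)\neq 1$ the restriction $\chi|_{G_0}$ is a multiple $e\cdot\chi_0$ of a single irreducible character $\chi_0 \in \irr(G_0)$ (centrality forces every $\chi_0$ to be $G$-invariant, so Clifford theory is trivial here). This $\chi_0$ is automatically non-linear: since $G_0$ is perfect, $\chi_0$ linear would force it trivial, and then $\chi$ would factor through the abelian quotient $G/G_0 \cong Z/(Z\cap G_0)$, contradicting $\chi(1) \neq 1$. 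For any $g = z g_0$ with $z \in Z$, centrality gives $|\chi(z g_0)| = |\chi(g_0)| = e\,|\chi_0(g_0)|$ while $\chi(1) = e\,\chi_0(1)$, and one checks $zg_0 \in Z(\chi)$ iff $g_0 \in Z(\chi_0)$. Hence $\mr(G,\chi) = \mr(G_0,\chi_0)$ and therefore $\mr(G) \leq \mr(G_0)$.

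The second step is arithmetic, using Theorem \ref{thm:bound_to_mr}. If $G_0/Z(G_0)$ is sporadic, then $\mr(G_0) < 0.54 < 19/20$. If $G_0 = G_0(r)$ is of Lie type with $r \leq 9$, the theorem directly gives $\mr(G_0) \leq 19/20$. For $r > 9$ the smallest admissible prime power is $r = 11$; there $\tfrac{9}{r} = 9/11 < 19/20$ (as $180 < 209$) and $\tfrac{1}{\sqrt{r}-1} < 1/2$, and both quantities decrease in $r$. Thus in all cases $\mr(G) \leq \mr(G_0) \leq 19/20$, and plugging into Lemma \ref{lem:msupp-mr_connection} gives $\msupp_V(G) \geq \dim(V)/40$.

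The only genuinely non-mechanical step is the reduction $\mr(G) \leq \mr(G_0)$; once this is in hand everything else is a direct substitution into the previously established bounds.
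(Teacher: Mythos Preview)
Your proof is correct and follows exactly the same route as the paper: reduce to $\mr(G_0)\leq 19/20$ via Theorem~\ref{thm:bound_to_mr} and then apply Lemma~\ref{lem:msupp-mr_connection}. The paper simply asserts $\mr(G)=\mr(G_0)$ in one line, whereas you spell out the Clifford-theoretic reduction $\mr(G)\leq \mr(G_0)$; your extra detail is sound (and in fact, since $G/G_0$ is a quotient of the cyclic group $Z$, one even has $e=1$, though this is not needed).
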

\begin{proof}
  By Theorem \ref{thm:bound_to_mr}, we have $\mr(G)=\mr(G_0)\leq
  \frac{19}{20}$, so the claim follows from Lemma
  \ref{lem:msupp-mr_connection}.
\end{proof}
Now, we handle the case when $\soc(G/Z(G))$ is an alternating group. 
\begin{thm}\label{thm:threecycle}
  Let $G=S_m$ and $\chi=\chi^{(\lambda)}\in\irr(G)$ corresponding
  to the partition $\lambda=(\lambda_1\geq \ldots\geq\lambda_k)$ of
  $[m]$. Then $\chi^\lambda(1)-\chi^\lambda((123))\geq
  \frac{1}{m-1}\chi^\lambda(1)$ unless
  \[\lambda\in \{(m);\,(1,\ldots,1)\}.\]
\end{thm}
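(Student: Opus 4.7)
The plan is to translate the problem into an inequality on the second power sum of cell contents $p_2(\lambda):=\sum_{(i,j)\in\lambda}(j-i)^2$ via the Jucys--Murphy elements $X_k:=(1,k)+(2,k)+\cdots+(k-1,k)\in\CC[S_m]$. Expanding $X_k^2$ and observing that each 3-cycle on a triple $\{i,j,c\}$ arises exactly once, namely inside $X_c^2$ with $c=\max\{i,j,c\}$, yields the central identity
\[
\sum_{k=2}^{m} X_k^2 \;=\; \binom{m}{2} \;+\; T_3
\]
in $\CC[S_m]$, where $T_3$ is the sum of all 3-cycles. Since both sides are central, they act on the irreducible module with character $\chi^\lambda$ by scalars: the left side by $p_2(\lambda)$ (the standard Jucys--Murphy eigenvalue identity) and $T_3$ by $2\binom{m}{3}\chi^\lambda((123))/\chi^\lambda(1)$. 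This gives the classical formula
\[
\frac{\chi^\lambda((123))}{\chi^\lambda(1)}\;=\;\frac{p_2(\lambda)-\binom{m}{2}}{2\binom{m}{3}}.
\]

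Using this identity, the target inequality $\chi^\lambda(1)-\chi^\lambda((123))\geq\chi^\lambda(1)/(m-1)$ becomes
\[
p_2(\lambda) \;\leq\; \binom{m}{2}+\frac{m(m-2)^2}{3}.
\]
For the excluded partitions $\lambda\in\{(m),(1^m)\}$ the contents are $\{0,\pm 1,\ldots,\pm(m-1)\}$ and $p_2(\lambda)=m(m-1)(2m-1)/6$, which strictly exceeds this bound for every $m\geq 3$; this matches the trivial and sign characters taking the value $\chi^\lambda(1)$ on every 3-cycle. For every other $\lambda$ one has $\lambda_1\leq m-1$ and $\lambda_1'\leq m-1$, so $|j-i|\leq m-2$ on each cell. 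My plan is to show that in this regime $p_2$ is maximized by the hook $(m-1,1)$ (equivalently by its conjugate $(2,1^{m-2})$), whose value $\frac{(m-2)(m-1)(2m-3)}{6}+1$ falls below the above bound by exactly $\frac{2m(m-2)}{3}$.

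The technical heart of the argument is thus the monotonicity statement $p_2(\lambda)\leq p_2((m-1,1))$ whenever $\lambda\neq(m),(1^m)$. I would split this into two substeps: (a) for fixed first row $\lambda_1=a$, prove that the hook $(a,1^{m-a})$ maximizes $p_2$ among partitions of $m$ with $\lambda_1=a$, since collapsing the lower rows into a single column replaces contents of small magnitude by the most negative contents available; (b) optimize the resulting explicit $p_2$ value over hooks $(a,1^{m-a})$ with $2\leq a\leq m-1$, which is symmetric under $a\leftrightarrow m-a+1$ and maximized at the endpoints $a\in\{2,m-1\}$. Step (b) is a one-line calculation. The main obstacle is step (a); I would attempt it by induction on the tail size $n:=m-a$, reducing it to the auxiliary claim ``$(1^n)$ maximizes $\sum_{(i,j)\in\mu}(j-i-1)^2$ over $\mu\vdash n$''. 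Alternatively one can bound the diagonal-length profile $d_c(\lambda):=\#\{(i,j)\in\lambda: j-i=c\}\leq \lambda_1-c$ for $c\geq 0$ (with the symmetric bound for $c<0$) and exploit convexity of $c\mapsto c^2$ to conclude that putting mass at the extreme contents, as the hook does, maximizes $p_2(\lambda)=\sum_c c^2 d_c(\lambda)$.
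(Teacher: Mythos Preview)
Your approach is correct and genuinely different from the paper's. The paper argues via the Murnaghan--Nakayama rule and three applications of the branching rule: it bounds $\chi^\lambda((123))$ by the sum of $\chi^\nu(1)$ over straight rim $3$-hooks, then for each such $\nu=\lambda-\{i,i,i\}$ locates a ``neighbouring'' $\tau=\lambda-\{i,i,j\}$ with $\chi^\nu(1)\leq (m-2)\chi^\tau(1)$, and compares to the full branching expansion of $\chi^\lambda(1)$. Your route instead invokes the Frobenius/Jucys--Murphy identity to rewrite the ratio $\chi^\lambda((123))/\chi^\lambda(1)$ as $(p_2(\lambda)-\binom{m}{2})/2\binom{m}{3}$ and reduces everything to the purely combinatorial bound $p_2(\lambda)\leq p_2((m-1,1))$ for $\lambda\notin\{(m),(1^m)\}$. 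This is cleaner and more conceptual; the paper's argument, on the other hand, stays entirely within elementary character theory of $S_m$ and avoids appealing to the centrality of symmetric functions of Jucys--Murphy elements.

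Your ``main obstacle'' (a) is in fact not an obstacle at all: the auxiliary claim that $(1^n)$ maximizes $Q(\mu):=\sum_{(i,j)\in\mu}(j-i-1)^2$ over $\mu\vdash n$ follows by a one-line induction. Remove any corner cell of $\mu$ with content $c$; by induction $Q(\mu)\leq \sum_{k=1}^{n-1}k^2+(c-1)^2$, and since every content of a partition of $n$ lies in $[-(n-1),n-1]$ one has $|c-1|\leq n$, hence $(c-1)^2\leq n^2$ and $Q(\mu)\leq\sum_{k=1}^n k^2=Q((1^n))$. Step (b) is, as you say, immediate: writing $f(a)=6\,p_2((a,1^{m-a}))$ one computes $f'(a)=6m(2a-m-1)$, so $f$ is minimized at $a=(m+1)/2$ and maximized at the endpoints $a\in\{2,m-1\}$. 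With both steps in hand your inequality $p_2(\lambda)\leq p_2((m-1,1))=\frac{(m-2)(m-1)(2m-3)}{6}+1$ holds with the claimed slack $\frac{2m(m-2)}{3}$ below the target bound $\binom{m}{2}+\frac{m(m-2)^2}{3}$, and the theorem follows.
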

\begin{proof}
  First, we introduce some notation.  Let $\lambda=(\lambda_1\geq
  \ldots\geq\lambda_k)$ be a partition of $m$, different from the two
  exceptional ones given in the theorem.  For any natural numbers
  $i_1,\ldots,i_k$ let $\chi^{\lambda-\{i_1,\ldots,i_k\}}$ be the
  character of $S_{m-k}$ corresponding to the Young diagram obtained
  from the diagram of $\lambda$ by deleting the last cells of the
  $i_1$-th,\ldots,$i_k$-th row in that order with the assumption that
  $\lambda-\{i_1,\ldots,i_s\}$ is a valid Young diagram for each
  $1\leq s\leq k$. Otherwise, we define
  $\chi^{\lambda-\{i_1,\ldots,i_k\}}$ as the constant zero function on
  $S_{m-k}$.

  By the Murnaghan-Nakayama rule (see \cite[21.1]{James}),
  \begin{align*}
  \chi^\lambda((123))&\leq\sum_{\nu\in\{\lambda-rh(3)\}}\chi^{\nu}(1)
  +\sum_{\nu\in\{\lambda-rh(1,1,1)\}}\chi^{\nu}(1)\\
  &=\sum_{\nu\in\{\lambda-rh(3)\}}\chi^{\nu}(1)+
  \sum_{\nu\in\{\overline{\lambda}-rh(3)\}}\chi^{\nu}(1)
  \end{align*}
  where $\{\lambda-rh(*)\}$ denotes the set of partitions of $m-3$
  which we can get from the Young-diagram of $\lambda$ be removing a
  rim $3$-hook of type $(*)$ such that the remaining cells form a
  valid Young diagram.

  On the other hand, by using the branching rule (three times) one
  gets
  \[
  \chi^\lambda(1)=\sum_{i,j,k}\chi^{\lambda-\{i,j,k\}}(1).
  \]

  Let $\nu\in\{\lambda-rh(3)\}$. Then $\nu=\lambda-\{i,i,i\}$ for some
  (unique) $i$. Now, there is a $j\neq i$ such that
  $\tau=\lambda-\{i,i,j\}$ is a valid Young diagram. Then 
  both induced characters $(\chi^\tau)^{S_{m-2}}$ and $(\chi^\nu)^{S_{m-2}}$
  contain $\chi^{\lambda-\{i,i\}}$ as a component which results 
  $\chi^{\nu}(1)\leq \chi^{\lambda-\{i,i\}}(1)\leq (m-2)\chi^{\tau}(1)$.
  The same argument can be applied to any $\nu\in\{\overline{\lambda}-rh(3)\}$.
  It follows that 
  \begin{align*}
  \chi^\lambda(1)&\geq \sum_i\chi^{\lambda-\{i,i,i\}}(1)\Big(1+\frac{1}{m-2}\Big)+
  \sum_i\chi^{\overline{\lambda}-\{i,i,i\}}(1)\Big(1+\frac{1}{m-2}\Big)\\
  &\geq \frac{m-1}{m-2} \chi^\lambda((123)).
  \end{align*}
  Hence $\chi^\lambda(1)-\chi^\lambda((123))\geq \frac{1}{m-1} \chi^\lambda(1)$
  which proves the claim.
\end{proof}
This result will be adequate for our purposes only if the degree of
$\chi$ is large enough.  In order to get an overall picture about the
form of Young diagrams defining characters of small degree, we will
use a result of Rasala \cite{Rasala}. In what follows, we use the
terminology from Rasala's paper. For any partition $\lambda$ of $m$,
let $|\lambda|=m$ be the order of $\lambda$ and let $\lambda^*$ be the
partition dual to $\lambda$.  The partition $\lambda$ is called
primary, if $\lambda\geq \lambda^*$, where $\geq$ denotes the standard
ordering on partitions.  If
$\lambda=(\lambda_1\geq\ldots\geq\lambda_k)$ is a partition of $k$ and
$m\geq \lambda_1+k$, then let $m/\lambda$ denote the partition of $m$
defined as $m/\lambda=(m-k\geq \lambda_1\geq\ldots\geq\lambda_k)$ and
let $\phi_\lambda(m):=\chi^{m/\lambda}(1)$ be the degree of the
character of $S_m$ associated to $m/\lambda$. (Note that
$\phi_\lambda(m)$ is a polynomial in $m$ by \cite[Theorem A]{Rasala}.)
For any set $P$ of partitions of $k$ and for $m$ large enough, let
$L(P,m):=\{\phi_\lambda(m)\,|\,\lambda\in P\}$ and let $\delta(P,m)$
be the largest degree in $L(P,m)$. Then $P$ is said to be $m$-minimal,
if for every primary partition $\mu$ of $m$ either $\chi^\mu(1)>\delta(L,P)$ or 
$\mu=m/\lambda$ for some $\lambda\in P$. 

By \cite[Main Theorem 1.]{Rasala} (for $k=3$) we have
\begin{thm}
  Let $P_3$ be the set of all partitions of order at most $3$, that
  is, $P_3=\{\emptyset;(1);(2);(1,1);(3);(2,1);(1,1,1)\}$. Then $P_3$
  is $m$-minimal for every $m\geq 15$.
\end{thm}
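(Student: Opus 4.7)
This statement is the $k=3$ case of Rasala's main theorem classifying the partitions that index the smallest-degree irreducible characters of $S_m$, so the cleanest proof is simply to invoke \cite[Main Theorem 1]{Rasala}, as the text already indicates. Still, let me sketch how one would reprove it directly.

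The first step is to enumerate the seven partitions $m/\lambda$ for $\lambda\in P_3$, namely $(m)$, $(m-1,1)$, $(m-2,2)$, $(m-2,1,1)$, $(m-3,3)$, $(m-3,2,1)$, $(m-3,1,1,1)$, compute their degrees via the hook length formula, and identify $\delta(P_3,m)=\phi_{(2,1)}(m)=m(m-2)(m-4)/3$ as the largest element of $L(P_3,m)$ once $m\geq 15$. The other six degrees are polynomials in $m$ of strictly smaller degree or smaller leading behaviour, so this comparison is mechanical.

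The bulk of the argument is then to show that every primary partition $\mu$ of $m$ not of the form $m/\lambda$ with $\lambda\in P_3$ satisfies $\chi^\mu(1)>m(m-2)(m-4)/3$. I would split into two cases according to whether $\mu_1\geq m-3$ or $\mu_1\leq m-4$. The first case is already exhausted by the seven partitions above once one uses primality $\mu_1\geq \mu_1^*$ to bound the column lengths. In the second case $\mu$ is a primary partition with $\mu_1\leq m-4$, so its Young diagram has at least four cells outside the first row; \cite[Theorem A]{Rasala} writes $\phi_\lambda(m)=\chi^{m/\lambda}(1)$ as a polynomial of degree $|\lambda|$ in $m$, and a direct hook length estimate yields a lower bound on $\chi^\mu(1)$ of order $m^4$, strong enough to dominate the cubic threshold $m(m-2)(m-4)/3$ for all $m\geq 15$.

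The main obstacle is making the quartic lower bound uniform in $\mu$ across the infinite family of primary partitions with $\mu_1\leq m-4$, and then separately verifying the finitely many boundary values of $m$ near $15$ where the polynomial comparison is tight and one cannot rely on asymptotics. This combinatorial bookkeeping is precisely what Rasala carries out, and in the present setting there is nothing to be gained by repeating it; the theorem is quoted as stated.
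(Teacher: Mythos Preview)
Your proposal is correct and matches the paper's approach exactly: the paper does not give an independent proof of this theorem but simply quotes it as the $k=3$ case of \cite[Main Theorem 1.]{Rasala}, just as you propose. Your additional sketch of a direct argument is reasonable but goes beyond what the paper does.
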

Thus, by using the hook length formula and the Murnaghan-Nakayama rule
we can calculate the exact values of $\chi^\lambda(1)$ and
$\chi^\lambda((123))$ when $\chi^\lambda(1)$ is among the first seven
smallest character degrees of $S_m$ for $m\geq 15$.  Otherwise, we get a
reasonably large lower bound for $\chi^\lambda(1)$.  (Note that
$\lambda$ or $\lambda^*$ is primary and
$\chi^\lambda(1)=\chi^{\lambda^*}(1),\ \chi^\lambda((123))=\chi^{\lambda^*}((123))$
holds for every partition $\lambda$ of $m$.)
\begin{cor}
  Let $\lambda$ be a partition of $m$ for $m\geq 15$ and let
  $\chi^\lambda\in\irr(S_m)$ be the character of $S_m$ associated to $\lambda$. 
  Then $\chi^\lambda(1)$ and $\chi^\lambda((123))$ are as given in 
  Table \ref{tab:smalldegreechi}
  \renewcommand{\arraystretch}{1.5} 
  \begin{table}[ht]
    \begin{tabular}{@{}lll@{}}
      \toprule
      $\lambda$ or $\lambda^*$&$\chi^\lambda(1)=\chi^{\lambda^*}(1)$&
      $\chi^\lambda((123))=\chi^{\lambda^*}((123))$\\
      \midrule
      $(m)        $&$1$                           &$1$\\
      $(m-1,1)    $&$m-1$                         &$m-4$\\
      $(m-2,2)    $&$\frac{1}{2}m(m-3)$           &$\frac{1}{2}(m-3)(m-6)$\\
      $(m-2,1,1)  $&$\frac{1}{2}(m-1)(m-2)$       &$\frac{1}{2}(m-4)(m-5)$\\
      $(m-3,3)    $&$\frac{1}{6}m(m-1)(m-5)$      &$\frac{1}{6}(m-3)(m-4)(m-8)+1$\\
      $(m-3,2,1)  $&$\frac{1}{3}m(m-2)(m-4)$      &$\frac{1}{3}(m-3)(m-5)(m-7)-1$\\
      $(m-3,1,1,1)$&$\frac{1}{6}(m-1)(m-2)(m-3)$  &$\frac{1}{6}(m-4)(m-5)(m-6)+1$\\
      \bottomrule
    \end{tabular}
    \caption{Character values of $S_m$ when the degree is small.}\label{tab:smalldegreechi}
  \end{table}
  or  $\chi^\lambda(1)>\frac{1}{3}m(m-2)(m-4)$.
\end{cor}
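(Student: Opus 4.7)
The plan is to combine Rasala's theorem just stated with direct calculations via the hook length formula and the Murnaghan--Nakayama rule. Since $\chi^{\lambda^*}(g) = \operatorname{sgn}(g)\chi^\lambda(g)$ and $(123)$ is an even permutation, both $\chi^\lambda(1) = \chi^{\lambda^*}(1)$ and $\chi^\lambda((123)) = \chi^{\lambda^*}((123))$. Replacing $\lambda$ by $\lambda^*$ if necessary, I may therefore assume $\lambda$ is primary. Applying Rasala's theorem with $P = P_3$, either $\lambda = m/\mu$ for some $\mu \in P_3$ --- producing precisely the seven partitions in the first column of Table \ref{tab:smalldegreechi} (obtained by prepending a row of length $m - |\mu|$ to $\mu$) --- or $\chi^\lambda(1) > \delta(P_3, m)$.

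Next, I would verify the second column of Table \ref{tab:smalldegreechi} by applying the hook length formula to each of the seven shapes; these are elementary polynomial-in-$m$ computations matching the stated values of $\phi_\mu(m)$. A direct comparison of the seven polynomials shows that for $m \geq 15$ the largest among them is $\phi_{(2,1)}(m) = \tfrac{1}{3}m(m-2)(m-4)$, so $\delta(P_3, m) = \tfrac{1}{3}m(m-2)(m-4)$, settling the ``otherwise'' alternative of the corollary.

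For the third column, I would apply the Murnaghan--Nakayama rule: $\chi^\lambda((123))$ equals the signed sum $\sum_\xi (-1)^{\ell(\xi)} \chi^{\lambda \setminus \xi}(1)$ over all rim $3$-hooks $\xi$ of $\lambda$ whose removal leaves a valid Young diagram, where $\ell(\xi)$ is the leg-length. For each of the seven shapes one enumerates the permissible rim $3$-hooks; the resulting diagrams again lie among the low-degree shapes, so their degrees are easily read off from the previously computed polynomials.

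The main obstacle is the careful enumeration of rim $3$-hooks, particularly for the non-hook shapes. Concretely, for $(m-3, 2, 1)$ the horizontal strip $\{(1, m-5), (1, m-4), (1, m-3)\}$ (leg-length $0$) contributes $+\chi^{(m-6, 2, 1)}(1) = +\tfrac{1}{3}(m-3)(m-5)(m-7)$, while the $L$-shaped strip $\{(2,2), (2,1), (3,1)\}$ (leg-length $1$) contributes $-\chi^{(m-3)}(1) = -1$, accounting for the ``$-1$'' in the table. For $(m-3, 3)$ one additionally uses the entire second row $\{(2,1), (2,2), (2,3)\}$ (leg-length $0$) contributing $+1$; for $(m-3, 1, 1, 1)$ the column-$1$ strip $\{(2,1), (3,1), (4,1)\}$ (leg-length $2$) contributes $+1$. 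Once all border strips are correctly identified and assigned their signs, the remaining calculations are routine.
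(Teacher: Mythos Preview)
Your proposal is correct and follows essentially the same approach as the paper: reduce to primary $\lambda$ via the sign twist, invoke Rasala's $m$-minimality of $P_3$ to isolate the seven shapes, compute degrees by the hook length formula (identifying $\delta(P_3,m)=\tfrac{1}{3}m(m-2)(m-4)$), and evaluate $\chi^\lambda((123))$ via the Murnaghan--Nakayama rule. The paper merely states this outline in the paragraph preceding the corollary, whereas you supply the explicit rim $3$-hook enumerations; those enumerations are accurate.
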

Now, we give an analogue of Corollary \ref{cor:msupp-dim_bound} for 
alternating-type groups.
\begin{cor}\label{cor:msupp-dim_symmetric}
  Let $V$ be a vector space over the finite field $\FF q$ and let
  $G=Z\cdot G_0\leq GL(V)$ where $G_0$ is a coprime irreducible linear
  group and $G_0/Z(G_0)\simeq A_m$ for some $m\geq 5$.  Let us assume
  that $V$ is not a component of the natural permutation $\FF q
  A_m$-module. Then $\msupp_V(G)\geq \frac{1}{16}\sqrt{\dim(V)}$.
\end{cor}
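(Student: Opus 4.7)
The plan is to apply Lemma \ref{lem:msupp-mr_connection} to an irreducible component $\chi\in\irr(G)$ of the Brauer character of $V$. Since $(|G|,q)=1$, $\chi$ is an ordinary character, and since $G=ZG_0$ with $Z$ central, $\chi|_{G_0}$ is irreducible; it corresponds either to a character of $A_m$ itself (the ``non-spin'' case) or to a faithful character of a nontrivial central cover $r.A_m$ with $r\in\{2,3,6\}$ (the ``spin'' case). Writing $\dim V=t\chi(1)$ with $t$ the number of Galois conjugates, the sharp form of Lemma \ref{lem:msupp-mr_connection} that is proved there gives
\[
\msupp_V(G)\;\geq\;\tfrac{t}{2}\bigl(\chi(1)-\max_{g\notin Z(\chi)}|\chi(g)|\bigr).
\]

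In the non-spin case, $\chi|_{G_0}$ is a constituent of $\chi^\lambda|_{A_m}$ for some partition $\lambda$ of $m$. The components of the natural permutation $\FF{q}A_m$-module are exactly the trivial and the standard representation, so the hypothesis excludes $\lambda\in\{(m),(1^m),(m-1,1),(2,1^{m-2})\}$. Choosing $g\in G_0$ to be a lift of the $3$-cycle $(123)\in A_m$, Theorem \ref{thm:threecycle} yields $\chi^\lambda(1)-\chi^\lambda((123))\geq \chi^\lambda(1)/(m-1)$. To upgrade this to a two-sided bound $\chi^\lambda(1)-|\chi^\lambda((123))|\geq \chi^\lambda(1)/(m-1)$, one can invoke the Rasala-based table derived just before the corollary for the seven smallest-degree partitions (where the values can be read off directly and are seen to satisfy the inequality), and for all larger degrees exploit that Murnaghan–Nakayama lets one bound the negative contributions by the same rim-hook sum controlling the positive ones, using the branching-rule inequality $\chi^\nu(1)\leq(m-2)\chi^\tau(1)$ from the proof of Theorem \ref{thm:threecycle}. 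Combined with the degree bound $\chi^\lambda(1)\geq m(m-3)/2$ from the table, this gives $\sqrt{\dim V}\geq \sqrt{\chi(1)}\geq (m-1)/2$ for all $m\geq 5$, hence
\[
\msupp_V(G)\;\geq\;\frac{\dim V}{2(m-1)}\;=\;\frac{\sqrt{\dim V}}{2(m-1)}\cdot\sqrt{\dim V}\;\geq\;\frac{\sqrt{\dim V}}{16}.
\]

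In the spin case, a faithful character of $r.A_m$ has degree at least $2^{\lfloor(m-2)/2\rfloor}$, which is exponentially large in $m$; the minimum support on a $3$-cycle lift again gives a $1/O(m)$ gap in the character ratio via an analogous rim-hook/branching analysis for the spin characters, and because $\chi(1)$ is exponentially larger than $m$, the required inequality $\chi(1)-\max|\chi(g)|\geq \sqrt{\chi(1)}/8$ follows a fortiori. The main obstacle is the passage from the one-sided Theorem \ref{thm:threecycle} to the two-sided bound on $|\chi^\lambda((123))|$ (and the corresponding uniform bound over all non-central $g$, not just $3$-cycles), together with dispensing with the small exceptional covers $3.A_6$, $6.A_6$, $3.A_7$, $6.A_7$, where one can fall back on explicit character tables.
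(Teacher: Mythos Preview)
Your plan has the right ingredients but misses the key mechanism the paper uses to get from a single conjugacy class to all of $G$. You correctly identify the ``main obstacle'': Lemma~\ref{lem:msupp-mr_connection} in its second form needs control of $\max_{g\notin Z(\chi)}|\chi(g)|$, yet Theorem~\ref{thm:threecycle} only bounds $\chi^\lambda((123))$. Your suggestion to upgrade the $3$-cycle bound to a two-sided bound via the Rasala table and a Murnaghan--Nakayama argument does not extend to an arbitrary $g$; indeed the Remark just before this corollary points out that $\mr(A_m)$ admits no uniform bound below $1$, so a direct attack on $\max_g|\chi(g)|/\chi(1)$ is hopeless in general.

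The paper sidesteps this entirely with Lemma~\ref{lem:commutator_supp}. In the non-spin case ($G_0\simeq A_m$, $m\geq 9$) one only bounds $\supp_V((123))\geq\frac14\sqrt{\dim V}$ (from Theorem~\ref{thm:threecycle} together with $\dim V\geq\frac12 m(m-3)$), and then uses the elementary group-theoretic fact that for every $1\neq g\in A_m$ there exist $x,y\in A_m$ with $[g,x,y]$ a $3$-cycle. Two applications of Lemma~\ref{lem:commutator_supp}(1) give $\supp_V(g)\geq\frac14\supp_V((123))\geq\frac1{16}\sqrt{\dim V}$. No uniform character-ratio bound is needed.

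The spin case $G_0\simeq 2.A_m$ is handled by a different trick, not by rim-hook analysis of spin characters: if $\bar t=(12)(34)$ then $t$ and $tz$ are conjugate in $2.A_m$ (Hoffman--Humphreys), so $z=[h,t]$ for some $h$, whence $\supp_V(t)\geq\frac12\supp_V(z)=\frac12\dim V$. Every nontrivial $\bar g\in A_m$ has a double commutator conjugate to $(12)(34)$, so again Lemma~\ref{lem:commutator_supp} gives $\msupp_V(G)\geq\frac18\dim V$. The remaining small cases (including the exceptional covers) are disposed of by the trivial observation that $\dim V\leq\sqrt{|G_0|}<16^2$, so $\frac1{16}\sqrt{\dim V}<1\leq\msupp_V(G)$.
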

\begin{proof}
  As in the proof of Lemma \ref{lem:msupp-mr_connection},
  $\msupp_V(G)=t\cdot\msupp_{V_1}(G)$ and $\dim(V)=t\cdot\dim(V_1)$
  where $V_1$ is an (absolutely) irreducible component of $\CF q
  G$-module $V\otimes \CF q$. Then the claim clearly follows if we
  prove that $\msupp_{V_1}(G)\geq \frac{1}{16}\sqrt{\dim(V_1)}$.  In other words,
  we can assume that $V$ is absolutely irreducible.  First let us
  assume that $G_0\simeq A_m$ for some $m\geq 9$.  Let
  $\phi\in\irr(A_m)$ be the Brauer character associated to $V$ and
  $\chi\in\irr(S_m)$ above $\phi$, i.e. $[\chi_{A_m},\phi]\neq
  0$. Then either $\chi_{A_m}=\phi$ (if $\chi$ is not self-dual) or
  $\chi_{A_m}=\phi+\phi^{(12)}$ (if $\chi$ is self-dual). In the
  latter case $\phi((123))=\chi((123))/2$, since the conjugacy class
  $(123)^{S_m}$ does not split in $A_m$. Let $\epsilon$ be $1$ or
  $1/2$ according to these cases, so $\phi(1)=\epsilon \chi(1)$ and
  $\phi((123))=\epsilon\chi((123))$.  By \cite[Result 2.]{Rasala}, we
  have $\dim(V)=\epsilon\chi(1)\geq \frac{1}{2}m(m-3)$.  If
  $\phi((123))<0$, then $\supp_V((123))\geq \frac{1}2\dim(V)\geq
  \frac{1}4\sqrt{\dim(V)}$ holds trivially. Otherwise, by using Lemma
  \ref{lem:msupp-mr_connection} and Theorem \ref{thm:threecycle} we
  get that
  \begin{align*}
  \supp_V((123))&\geq \frac{1}{2}(\phi(1)-|\phi((123))|)
  =\frac{\epsilon}2 (\chi(1)-\chi((123)))=\frac{\epsilon\chi(1)}{2(m-1)}\\
  &=\frac{\dim(V)}{2(m-1)}\geq \frac{\sqrt{m(m-3)/2}\sqrt{\dim(V)}}{2(m-1)}
  \geq \frac{1}{4}\sqrt{\dim(V)}.
  \end{align*}
  For any
  element $1\neq g\in A_m$ there are $x,y\in A_m$ such that $[g,x,y]$
  is a three-cycle.  Applying Lemma \ref{lem:commutator_supp} twice,
  we get that $\supp_V(g)\geq \frac{1}4\supp_V((123))\geq 
  \frac{1}{16}\sqrt{\dim(V)}$.

  Now, let us assume that $m>7$ and $G_0$ is the universal covering
  group of $A_m$, so $G_0\simeq 2.A_m$. Let $z\in G_0$ be the
  generator of $Z(G_0)\simeq C_2$ and let $\bar{g}\in A_m$ denote the
  image of any $g\in G_0$ under the natural surjection by $G_0\mapsto
  A_m$.  Then $z$ acts on $V$ as a scalar transformation $z(v)=-v$ for
  all $v\in V$, so $\supp_V(z)=\dim(V)$. Let $t\in G_0$ such that
  $\bar{t}=(12)(34)$.  By Theorem \cite[Theorem
  3.9]{HoffmanHumphreys}, $t$ and $tz$ are conjugate, so $z=[h,t]$ for
  some $h\in G_0$. It follows that $\supp_V(t)\geq
  \frac{1}{2}\supp_V(z)=\frac{\dim(V)}2$ by Lemma
  \ref{lem:commutator_supp}. (In fact, by using this argument to $tz$
  instead of $t$ one can prove equality here.) Now, for any $g\in
  G_0\setminus Z$ one can choose $x,y\in G$ such that
  $[\bar{g},\bar{x},\bar{y}]$ is conjugate to $\bar{t}$. Using again
  Lemma \ref{lem:commutator_supp} twice, we get that $\supp_V(g)\geq
  \frac{1}4\supp_V(t)=\frac{1}{8}\dim(V)\geq  \frac{1}{16}\sqrt{\dim(V)}$.

  For the remaining cases, $\dim(V)\leq \sqrt{|G_0|}<16^2$, so
  $\frac{1}{16}\sqrt{\dim(V)}<1\leq \msupp_V(G)$ follows. 
\end{proof}
The next result gives a bound to the order of most coprime quasisimple 
linear groups similar to that of $|G|\leq |V|^2=q^{2\dim(V)}$  
but using the minimal support $\msupp_V(G)$ instead of $\dim(V)$. 
\begin{thm}\label{thm:quasisimple}
  Let $V$ be a vector space over the finite field $\FF q$ and let
  $G=Z\cdot G_0\leq GL(V)$ where $G_0$ is a coprime quasisimple
  irreducible linear group.

  Then one of the following holds:
  \begin{enumerate}
  \item $\log_q|G|\leq d\cdot \msupp_V(G)$ with $d=5$.
  \item $G_0\simeq A_m$ and $V$ is the non-trivial irreducible component
    of the natural permutation module of $A_m$ over $\FF q$.
  \item $G_0=G_0(r)$ is a finite quasisimple group of Lie type
    over the finite field $\FF r$ with $r\leq
    43$, and $|V|$ is bounded by an absolute constant.
  \end{enumerate}
\end{thm}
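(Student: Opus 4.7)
The plan is to split the analysis by the isomorphism type of $G_0/Z(G_0)$ according to the classification of finite simple groups (sporadic, Lie type, alternating), and in each case combine a lower bound on $\msupp_V(G)$ coming from the machinery of Section 3 with an upper bound on $|G_0|$ and a lower bound on $\dim V$ provided by the minimal faithful cross-characteristic degree of the relevant covering group. For $G_0/Z(G_0)$ sporadic, Theorem \ref{thm:bound_to_mr}(1) together with Lemma \ref{lem:msupp-mr_connection} gives $\msupp_V(G)\geq 0.23\,\dim(V)$; meanwhile $|G_0|$ and the minimal faithful cross-characteristic degree of each covering group of each sporadic simple group are absolute constants (read from the Modular Atlas), and the coprimality hypothesis forces $q$ to avoid every prime divisor of $|G_0|$. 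A finite case check in the spirit of the proof of Theorem \ref{thm:bound_to_mr}(1) then delivers conclusion (1) in this branch.

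For $G_0=G_0(r)$ of Lie type I would further split on the size of $r$. When $r>43$, Theorem \ref{thm:bound_to_mr}(2) gives $\mr(G)\leq 1/(\sqrt r-1)$, so $\msupp_V(G)$ is a definite positive fraction of $\dim V$; combined with the Landazuri--Seitz cross-characteristic lower bound on $\dim V$ and the standard estimate $|G_0(r)|\leq r^N$ (with $N$ the dimension of the ambient simple algebraic group), and keeping track of the ratio $\log_q r$ via the coprimality of $q$ with $r$, conclusion (1) follows. When $r\leq 43$ either $|V|$ is absolutely bounded, placing us in conclusion (3), or $|V|$ is large; in the latter situation the Landazuri--Seitz bound forces $\dim V$ to grow with the Lie rank, so the weaker estimate $\msupp_V(G)\geq \dim V/40$ coming from $\mr(G)\leq 19/20$ is already enough to dominate $\log_q|G|$.

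For $G_0/Z(G_0)\simeq A_m$ with $V$ not the non-trivial irreducible component of the natural permutation module, the coprimality hypothesis forces the prime part of $q$ to exceed every prime divisor of $m!$, so $q\geq m+1$ for $m\geq 5$, whence $\log_q|G|\leq \log_q(m!)\leq m$. If $G_0$ is a proper central cover of $A_m$ (so $G_0\simeq 2.A_m$, or one of $3.A_6,\,6.A_6,\,3.A_7,\,6.A_7$) the minimal faithful cross-characteristic degree is exponential in $m$ via the basic spin representation, and Corollary \ref{cor:msupp-dim_symmetric} trivially dominates $\log_q|G|\leq m$. If $G_0\simeq A_m$ itself, I would invoke Rasala's classification of small character degrees used in Section 3: for each $\lambda$ of codegree at most $3$, the explicit values of $\chi^\lambda(1)$ and $\chi^\lambda((123))$ from the table above give $\supp_V((123))$ of order at least $m$, and the commutator trick (applied twice, as in the proof of Corollary \ref{cor:msupp-dim_symmetric}) promotes this to $\msupp_V(G)\geq \alpha m$ for an explicit $\alpha>0$; for larger character degrees Theorem \ref{thm:threecycle} already gives $\supp_V((123))\gg m$. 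Either way $5\msupp_V(G)\geq m\geq \log_q|G|$, proving (1).

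The main obstacle I anticipate is the alternating branch. The blanket $\sqrt{\dim V}/16$ estimate recorded in the statement of Corollary \ref{cor:msupp-dim_symmetric} is much too weak to beat $\log_q|A_m|\sim m$; the argument must either extract the sharper intermediate bound $\msupp_V(G)\geq \dim V/(8(m-1))$ from the proof of that corollary, or, for the smallest faithful representations, use the explicit character values of Rasala's table together with the coprimality-driven lower bound $q\geq m+1$. A small auxiliary case analysis is then needed for the exceptional covers of $A_6$ and $A_7$ and for small $m$ where Rasala's asymptotic classification does not yet apply. The Lie type case with $r\leq 43$ but $|V|$ moderately large is also delicate; presumably the constant $43$ appearing in conclusion (3) reflects the transition point at which the $1/(\sqrt r-1)$ bound of Theorem \ref{thm:bound_to_mr}(2) starts to improve on the generic $19/20$ estimate.
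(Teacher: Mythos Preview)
Your overall strategy matches the paper's: split by CFSG type, combine the character-ratio machinery of Section~3 with a group-order bound, and treat the alternating branch via the Rasala table plus the commutator trick. The alternating analysis you sketch (extract $\supp_V((123))\geq c\,m$ from Table~\ref{tab:smalldegreechi} or from Theorem~\ref{thm:threecycle}, then halve twice via Lemma~\ref{lem:commutator_supp}, and compare with $\log_q|G|\leq\log_q(m!)\leq m$) is exactly what the paper does for $A_m$, $m\geq 15$; the paper handles $2.A_m$ with $m\geq 12$ via the Kleshchev--Tiep minimal spin degree and the $\msupp_V(G)\geq\dim(V)/8$ bound extracted from the proof of Corollary~\ref{cor:msupp-dim_symmetric}, and disposes of all remaining small alternating cases (including $6.A_6$, $6.A_7$) by the same GAP check used for the sporadics, rather than by separate theoretical arguments.

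The one place your route diverges is the Lie-type case with $r$ large. You propose bounding $|G_0(r)|\leq r^N$, invoking Landazuri--Seitz for $\dim V$, and then tracking $\log_q r$. The paper avoids all of this by using the P\'alfy--Pyber coprime bound $|G|\leq |V|^2$, which already expresses $\log_q|G|$ directly as $\leq 2\dim V$; once $r\geq 47$ one has $\mr(G)<1/5$, hence $\msupp_V(G)\geq \tfrac{2}{5}\dim V$, and $\log_q|G|\leq 2\dim V=5\cdot\tfrac{2}{5}\dim V$ drops out in one line. This also explains the cutoff: the constant $43$ in conclusion~(3) is not where $1/(\sqrt r-1)$ first beats $19/20$ (that happens already at $r=5$), but where $\mr(G)$ first drops below $1/5$, so that the factor $5$ in $5\cdot\msupp_V(G)$ exactly absorbs the $2$ from $|G|\leq|V|^2$. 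Your approach would ultimately work too, but is noticeably more laborious and brings in the comparison of $q$ with $r$ that the paper's argument sidesteps entirely.
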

\begin{proof}
  For any sporadic group $S$, let $\widehat{S}$ be its universal
  covering group and let $q(S)$ be the smallest prime not dividing the
  order of $S$.  By using GAP \cite{GAP}, we checked that for
  every $\chi\in\irr(\widehat{S})$, the inequlity
  \[\log_{q(S)}|\widehat{S}|<
  d\cdot (\chi(1)-\max_{g\in \widehat{S}-Z(\chi)}|\chi(g)|)/2\] holds
  with $d>4.22$. (The largest value is attained for $2.J_2$.)  Now, if
  $G\leq GL(V)$ is any finite quasisimple group with sporadic simple
  quotient $S=G/Z(G)$, then $G$ is a homomorphic image of
  $\widehat{S}$, and we can view $V$ as an irreducible $\FF q
  \widehat{S}$-module (where $q\geq q(S)$). Now, if
  $\chi\in\irr(\widehat{S})$ is any irreducible component of the
  Brauer character corresponding to $V\otimes \CF q$, then
  \begin{align*}
    \log_q|G|&\leq \log_{q(S)}|\widehat{S}|<d\cdot 
    (\chi(1)-\max_{g\in \widehat{S}-Z(\chi)}|\chi(g)|)/2\\
    &\leq d\cdot \msupp_V(\widehat{S})\leq d\cdot \msupp_V(G)
  \end{align*}
  also holds with $d>4.22$ by Lemma \ref{lem:msupp-mr_connection}.

  Next, let $G\simeq A_m$ for some $m\geq 15$. Then we have $m<q$ by
  the coprime assumption.  Let us assume that $V$ is not a component
  of the natural permutation $\FF q A_m$-module. Let
  $\phi\in\irr(A_m)$ be an irreducible component of the Brauer
  character associated to $V$ and $\chi\in\irr(S_m)$ above $\phi$. By
  the proof of Corollary \ref{cor:msupp-dim_symmetric}, we have
  $\phi(1)=\varepsilon\chi(1)$, and $\phi((123))=\varepsilon\chi((123))$, where 
  $\varepsilon$ is $1/2$ or $1$ if $\chi$ is self-dual or not. 

  If $\chi$ is one of the characters given in Table
  \ref{tab:smalldegreechi}, then $\chi$ is not self-dual. In that case
  we have
  \[
  \supp_V((123))\geq \frac{1}2(\chi(1)-|\chi(123)|)\geq \frac{3}{2}(m-3)
  \]
  by using Lemma \ref{lem:msupp-mr_connection} and the last five rows
  of Table \ref{tab:smalldegreechi}.  Otherwise,
  $\chi(1)>\frac{1}{3}m(m-2)(m-4)$, so we have
  \begin{align*}
    \supp_V((123))&\geq \frac{1}2(\phi(1)-|\phi(123)|)\geq 
                    \frac{1}4(\chi(1)-|\chi(123)|)\\
            &\geq \frac{\chi(1)}{4(m-1)}>\frac{m(m-2)(m-4)}{12(m-1)}\geq m-3
  \end{align*}
  holds if $\phi(123)\geq 0$. However, if $\phi(123)<0$, then
  $\supp_V((123))\geq \frac{1}{2}\dim(V)\geq m-3$ holds trivially.
  Thus, $\supp_V((123))\geq m-3$ holds in any case.  Now, for any
  element $1\neq g\in A_m$ there are $x,y\in A_m$ such that $[g,x,y]$
  is a three-cycle.  Applying Lemma \ref{lem:commutator_supp} twice,
  we get that $\supp_V(g)\geq \frac{1}4\supp_V((123))\geq
  \frac{m-3}{4}$ holds for any $1\neq g\in A_m$.  Thus,
  $d\cdot\msupp_V(G)\geq \frac{d(m-3)}{4}\geq m\geq \log_m(m!)\geq
  \log_q|G|$ holds for $d\geq 5$.  

  Now, let $m\geq 12$ and let $G_0$ be the universal covering group of
  $A_m$, so $G_0\simeq 2.A_m$. By the proof of Corollary
  \ref{cor:msupp-dim_symmetric}, we have $\msupp_V(G)\geq
  \frac{1}{8}\dim(V)$. Using \cite[Main Theorem]{KleshchevTiep} we get that
  \begin{align*}
    d\cdot \msupp_V(G)&\geq \frac{d}{8}\dim(V)\geq \frac{d}{8}
  \min\{\chi(1)\,|\,\chi\in\irr(G),\,\chi(z)\neq \chi(1)\}\\
  &\geq d \cdot 2^{\lfloor m/2\rfloor-4}\geq m\geq \log_q|G|
  \end{align*}
  holds for $d\geq 3.25$.  For the remaining members of Alternating
  groups and their covers (i.e for $A_m,\ 12\leq m\leq 14)$, for
  $2.A_m (m=5\textrm{ or }8\leq m\leq 11)$ and for $6.A_6, 6.A_7$ we
  used the same algorithm as for sporadic groups. 
  
  Finally, let $G_0=G_0(r)$ be a quasisimple group of Lie type over 
  a finite field $\FF r$ with $(r,q)=1$. 
  First, suppose that $r\geq 47$. By part (2) of Theorem
  \ref{thm:bound_to_mr}, we have
  \[
  \mr (G)\leq\max\Big(\frac{1}{\sqrt{r}-1},\frac{9}{r}\Big)<\frac{1}{5}.
  \]
  By using \cite[Theorem 1.]{P3-Pyber} and Lemma  \ref{lem:msupp-mr_connection},
  \[\log_q |G|\leq 2n = 5\cdot\frac{2n}5\leq 5\cdot\msupp_V(G)\]
  For the rest of the proof, suppose that $r\leq 43$. Since $\chi
  (1)-2\supp (g)\leq |\chi(g)|$ for any $\chi\in\mathrm{Irr}(G)$, we
  have that
  \[\frac{1}{2}\chi(1)\Big( 1-\frac{|\chi(g)|}{\chi(1)}\Big)\leq\supp(g).\]
  Using that $\mr (G)\leq\frac{19}{20}$ also holds for all quasisimple groups of
  Lie-type by part (2) of Theorem \ref{thm:bound_to_mr}, we obtain that
  \[\frac{\dim(V)}{8}\leq 5 \msupp_V(G)\]
  by using Lemma \ref{lem:msupp-mr_connection} again.  Since (see
  \cite[Table 5.3.A]{Kleidman}) $\dim(V)\geq r^{\mathcal{O}(m)}$
  (where $m$ denotes the rank of $G_0(r)$) and
  $\log_q|G|=\mathcal{O}(m^2\log r)$, there exist only finitely many
  possible pairs $(m,r)$ such that
  $\log_q|G|>5\msupp_V(G)$. Furthermore, for any fixed $(m,r)$, the
  inequality $\log_q|G|\leq 5\msupp_V(G)$ still holds provided that
  $|V|$ is large enough.
\end{proof}
We close this section by handling the case (2) in Theorem
\ref{thm:quasisimple}. In this case $\msupp_V(G)$ is bounded.
(It is $1$ and $2$ for $G_0\simeq S_m$ and $G_0\simeq A_m$, respectively.)
Therefore, we give a direct proof for Theorem \ref{thm:randombase} in this case.
\begin{thm}\label{thm:sym_perm}
  Let $U$ be an $m$-dimensional vector space over $\FF q$,  
  and let $G=S_m$ with its natural permutation action on $U$. 
  Assuming that $(|G|,|U|)=1$, we have 
  \[P(\textrm{random }\ul u\in U^c\textrm{ is a base for }G)>1-\frac{1}{m^{c-2}}
   \textrm{ for any }c\geq 3.\]
  Hence three random vectors form a base for $G$ with high probability if 
  $m$ is large. 
\end{thm}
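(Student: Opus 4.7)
The plan is to bypass the support/character-ratio machinery of the previous sections entirely and reduce the claim to an elementary ``birthday problem'' on $\FF q^c$. For a tuple $\ul u=(u_1,\ldots,u_c)\in U^c$ and each coordinate $j\in[m]$, introduce the \emph{label}
\[
\ell_{\ul u}(j):=\bigl(u_1(j),\,u_2(j),\,\ldots,\,u_c(j)\bigr)\in\FF q^c.
\]
Since $G=S_m$ acts on $U=\FF q^m$ by permuting coordinates, a permutation $g$ fixes a vector $u\in U$ if and only if $u$ is constant on every cycle of $g$; equivalently, $u(g(j))=u(j)$ for all $j\in[m]$. Hence $g$ fixes all of $u_1,\ldots,u_c$ precisely when $\ell_{\ul u}(g(j))=\ell_{\ul u}(j)$ for every $j$. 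From this one reads off immediately that $\ul u$ is a base for $S_m$ if and only if the $m$ labels $\ell_{\ul u}(1),\ldots,\ell_{\ul u}(m)$ are pairwise distinct: distinct labels leave no non-trivial permutation of $[m]$ available, while any repetition $\ell_{\ul u}(j_1)=\ell_{\ul u}(j_2)$ produces the non-trivial stabilising transposition $(j_1\,j_2)$.

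The second step is a short probabilistic count. When $\ul u$ is drawn uniformly from $U^c=(\FF q^m)^c$, the $m$ labels $\ell_{\ul u}(1),\ldots,\ell_{\ul u}(m)$ are independent and uniformly distributed in $\FF q^c$, since all $cm$ scalar entries of the $u_i$ are independent uniform elements of $\FF q$. A union bound over the $\binom{m}{2}$ unordered pairs of positions therefore yields
\[
Pb(c,G,U)=P\bigl(\ell_{\ul u}(1),\ldots,\ell_{\ul u}(m)\text{ pairwise distinct}\bigr)\geq 1-\frac{\binom{m}{2}}{q^c}.
\]

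It remains only to feed in the coprimality hypothesis. The condition $(|S_m|,|U|)=(m!,q^m)=1$ forces every prime divisor of $q$ to exceed $m$, so $q\geq m+1$, and consequently
\[
\frac{\binom{m}{2}}{q^c}\leq\frac{m(m-1)}{2(m+1)^c}<\frac{m^c/2}{m^c}\cdot\frac{1}{m^{c-2}}<\frac{1}{m^{c-2}}\qquad(c\geq 3),
\]
which is the desired bound. The only genuinely non-routine point in this argument is the initial observation that the base property for the natural permutation module is equivalent to the purely combinatorial condition that all $m$ labels in $\FF q^c$ are distinct; the rest is a textbook birthday-type estimate, and in particular there is no need to chase $\msupp_V(G)$ here (which, as the authors note, is only $1$ for $S_m$ and so would produce a far weaker bound through Lemma~\ref{lem:P-Msupp}).
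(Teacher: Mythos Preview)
Your argument is correct and follows essentially the same approach as the paper: both identify the base condition with the $m$ rows of the $m\times c$ matrix $(u_1,\ldots,u_c)$ being pairwise distinct in $\FF q^c$, then use the coprimality hypothesis to obtain $q>m$ and finish with a birthday-type estimate. The only cosmetic difference is that the paper bounds the exact product $\prod_{i=0}^{m-1}(1-i/q^c)$ and applies Bernoulli, whereas you use a union bound over the $\binom{m}{2}$ pairs; both routes are standard and yield the claimed bound.
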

\begin{proof}
  The $\FF qG$-module $V^c$ can be naturally identified with
  $M^{m\times c}(q)$, the space of $m\times c$-matrices over $\FF
  q$. Under this identification, $G$ acts on $M^{m\times c}(q)$ by
  permuting the rows of each element of $M^{m\times c}(q)$ in a
  natural way. Hence, a matrix $a\in M^{m\times c}(q)$ is a base for
  $G$ if and only if the rows of $a$ are pairwise different elements
  of $M^{1\times c}(q)$, the space of $c$-dimensional row vectors over
  $\FF q$. Thus, the probability in question is equal to the
  probability that $m$ random elements of $M^{1\times c}(q)$
  are pairwise different, which is 
  \[
  \prod_{i=0}^{m-1}\frac{q^c-i}{q^c}>\left(\frac{q^c-q}{q^c}\right)^m
  \geq \left(1-\frac{1}{m^{c-1}}\right)^m\geq 1-\frac{1}{m^{c-2}},
  \]
  where the first and second inequalities follows since $m<q$ by the
  coprime assumption.  The claim follows.
\end{proof}
\begin{cor}\label{cor:sym_deleted_perm}
  Let $V$ be an $n$ dimensional vector space over the finite field $\FF q$ and
  let $G=Z\cdot G_0\leq GL(V)$ be a coprime linear group, where
  $G_0\simeq S_m$ or $G_0\simeq A_m$ and $V$ is the non-trivial
  irreducible component of the natural $\FF q G_0$-module.
  Then we have 
  \[
  P(\textrm{random }\ul v\in V^c\textrm{ is a base for
  }G)>1-\frac{1}{n^{c-2}} \textrm{ for any }c\geq 3.
  \]
\end{cor}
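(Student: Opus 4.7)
The plan is to adapt the approach of Theorem \ref{thm:sym_perm} to the deleted permutation module, handling the extra scalar subgroup $Z$ with some bookkeeping. First I would reduce to $G_0 = S_m$: since $Z \cdot A_m \leq Z \cdot S_m$, every $(Z \cdot S_m)$-base is automatically a $(Z \cdot A_m)$-base, so $Pb(c, Z \cdot A_m, V) \geq Pb(c, Z \cdot S_m, V)$.

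Identify $V^c$ with the space of matrices $A \in M^{m \times c}(\FF q)$ whose columns sum to zero via $(v_1, \ldots, v_c) \mapsto A$, with $v_i$ as the $i$-th column of $A$. Writing $r_1, \ldots, r_m \in \FF q^c$ for the rows of $A$, $\sigma \in S_m$ acts by permuting them and $\lambda \in Z$ scales every entry by $\lambda$; hence $(v_1, \ldots, v_c)$ is a $G$-base iff no pair $(\sigma, \lambda) \neq (1, 1) \in S_m \times \FF q^\times$ satisfies $r_{\sigma(j)} = \lambda r_j$ for every $j$, and it is a $G_0$-base iff the rows $r_j$ are pairwise distinct. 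The coprimality assumption forces $p > m$, so $m$ is invertible modulo $p$ and $q \geq m + 1$; the substitution $s_j := r_j - r_m$ ($j = 1, \ldots, m - 1$) is then a measure-preserving bijection from $\{(r_j)_{j=1}^m : \sum_j r_j = 0\}$ onto $(\FF q^c)^{m-1}$, under which the $s_j$ become iid uniform. The rows $r_j$ are pairwise distinct iff the $s_j$ are pairwise distinct and all nonzero, so by the union bound
\begin{equation*}
P(\text{rows not pairwise distinct}) \leq \binom{m}{2}/q^c < \tfrac{1}{2(m-1)^{c-2}},
\end{equation*}
the last inequality being the elementary estimate $(m+1)^c > m(m-1)^{c-1}$.

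To upgrade from $G_0$-base to $G$-base, I would bound the bad event $E$ in which the rows are pairwise distinct but some $(\sigma, \lambda)$ with $\lambda \neq 1$ satisfies $r_{\sigma(j)} = \lambda r_j$. Setting $d := \operatorname{ord}(\lambda) \geq 2$, under distinct rows such an identity forces each $\sigma$-orbit on $\{j : r_j \neq 0\}$ to have length exactly $d$, with at most one extra fixed index $j$ at which $r_j = 0$. Enumerating the compatible pairs $(\sigma, \lambda)$ for each $d \geq 2$ dividing $m$ or $m - 1$, and using that the conditional probability of the equations holding given $\sum_j r_j = 0$ equals $q^{c(k_d(\sigma) - m + 1)}$ with $k_d(\sigma) \leq m/d$ counting the $\sigma$-cycles of length a multiple of $d$, I obtain a bound for $P(E)$ that is exponentially small in $m$ for $c \geq 3$ and hence easily below $\tfrac{1}{2(m-1)^{c-2}}$. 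Combining the two estimates yields $Pb(c, G, V) > 1 - 1/(m-1)^{c-2} = 1 - 1/n^{c-2}$, as required.

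The main obstacle is the cycle-structure bookkeeping for $E$: correctly identifying that each $\sigma$-orbit on nonzero-row indices has length $d$, enumerating the resulting $(\sigma, \lambda)$ pairs, and confirming that their total contribution is dominated by the principal term $\binom{m}{2}/q^c$. Once this combinatorial restriction is in place, the remaining estimates are routine applications of $\prod(1 - a_i) \geq 1 - \sum a_i$ and the bound $q \geq m + 1$.
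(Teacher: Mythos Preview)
Your approach is sound and, in one respect, more careful than the paper's. The paper argues in two quick steps: (i) for every $g\in G\setminus G_0$ one has $\Fix_V(g)=0$, so a $G_0$-base is already a $G$-base; (ii) via the projection $U=V\oplus U_0\to V$ along the trivial summand, the $G_0$-base probability on $V$ equals the $S_m$-base probability on the full permutation module $U$, which is $>1-1/m^{c-2}$ by Theorem~\ref{thm:sym_perm}. Step~(ii) is essentially your change of variables $s_j=r_j-r_m$ combined with the union bound on row collisions.

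Step~(i), however, is not correct as stated: if $\sigma$ has a cycle of length $d>1$ and $\lambda\in\FF q^\times$ has order $d$, then $\lambda\sigma$ has a nonzero fixed vector on $V$. For instance with $m=3$, $q=7$, $\sigma=(123)$, $\lambda=2$ one checks that $v=(1,2,4)\in V$ is fixed by $\lambda\sigma$; the rows of $v$ are pairwise distinct, so $v$ is an $S_3$-base that is \emph{not} a $Z\cdot S_3$-base. Your estimate for the event $E$ --- rows distinct but some $\lambda\sigma$ with $\lambda\neq 1$ fixes the tuple --- is exactly what is needed to close this gap. The cycle-structure observation (each $\sigma$-orbit on nonzero-row indices has length exactly $d=\operatorname{ord}(\lambda)$, with at most one fixed index carrying the zero row) is correct, and the dimension count giving $P(r_{\sigma(j)}=\lambda r_j\ \forall j)=q^{c(k_d(\sigma)-m+1)}$ follows because on each $d$-cycle the rows sum to $(1+\lambda+\cdots+\lambda^{d-1})r_{j_1}=0$, so the constraint $\sum_j r_j=0$ is automatic. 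The resulting union bound is dominated by a term of order $q^{-c(m/2-1)}$, which is comfortably below $\tfrac{1}{2(m-1)^{c-2}}$ for $c\geq 3$ once $m$ is moderately large. Just make sure you also dispose of the small values of $m$, either by a short direct check or by noting that $q\geq m+1$ already gives enough room there.

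In summary: the same skeleton as the paper (row-distinctness via Theorem~\ref{thm:sym_perm}), but your explicit treatment of the scalar-twisted elements $\lambda\sigma$ with $\lambda\neq 1$ supplies the piece the paper's short argument omits.
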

\begin{proof}
  First, note that $\Fix(g)=0$ for every $g\in G\setminus G_0$, so a
  $\ul v\in V^c$ is a base for $G$ if and only if it is a base for
  $G_0$.  Second, let $U=V\oplus U_0$, where $U_0$ is the trivial
  module for $G_0$.  For any random vectors $u_1,\ldots,u_c\in U$ let
  $v_i$ be the projection of $u_i$ to $V$ along $U_0$. Then
  $u_1,\ldots,u_c$ is a base for $G_0$ if and only if $v_1,\ldots,v_c$
  is a base for $G_0$, so the claim follows from Theorem \ref{thm:sym_perm}.
\end{proof}  

\section{Proof of Theorem \ref{thm:randombase}}
Let $V$ be an $n$-dimensional vector space over the finite field $\FF
q$ and let $G\leq GL(V)=GL(n,q)$ be a coprime primitive linear group,
which is maximal, i.e. there is no coprime subgroup $L\leq GL(V)$
strictly containing $G$.  In the following, we give a structure
theorem of such groups very similar to a result about maximal solvable
primitive linear group (see \cite[Lemma 2.2]{Seress} and \cite[\S\S
19--20]{Suprunenko}).  Our proof uses ideas similar to those can be
found in \cite{Giudici}, \cite{HP}, and \cite{Suprunenko}. For the
convienience of the reader, we give a self-contained proof here.

In the following, we extend the vector space
structure on $V$ by defining multiplication on $V$ with elements from
a (possibly) larger field $\FF{q^k}\geq \FF q$ for some $k\mid n$. In
that way, $V$ will be both an $\FF q$-vector space and an
$\FF{q^k}$-vector space at the same time.

We will use the notation $V=V_n(q),\ V=V_d(q^k)$ or $V=V(q^k)$ 
if we would like to highlight the base field and/or the dimension of $V$. 

\begin{thm}\label{thm:generalcase}
  Let $V=V_n(q)$ be an $n$-dimensional vector space over the finite field
  $\FF q$ and let $G\leq GL(V)$ be a maximal coprime primitive linear group. 
  Then the following statements hold.
  \begin{enumerate}
  \item There is a unique maximal Abelian subgroup $Z\leq GL(V)$,
    which is normalised by $G$. Moreover, $Z$ is contained in $G$. 
  \item $Z$ is cyclic and $Z\cup\{0\}\simeq \FF{q^k}$ for some $k\mid n$.
  \item There is a (unique and maximal) $\FF{q^k}$ vector space
    structure $V=V_d(q^k)$ on $V$ for $d=n/k$ such that 
    $G\leq \Gamma L(d,q^k))$.
  \item Let $H:=G\cap GL(d,q^k)$. Then $Z\leq H=C_G(Z)\nor G$,
    furthermore $Z=Z(GL(d,q^k))$ is the group of scalar
    transformations on $V_d(q^k)$ and $G/H$ is included into the
    Galois group $\Gal(\FF{q^k},\FF q)$.
  \item Let $N=F^*(H)$ be the generalised Fitting subgroup of $H$. 
    Then $N/Z$ is the socle of $H/Z$. Furthermore, 
    $V_d(q^k)$ is an absolutely irreducible $\FF{q^k}N$-module. 
  \item Let $N_1,\ldots,N_t$ be the set of minimal normal subgroups
    of $H$ above $Z$. Then there is an absolutely irreducible $\FF{q^k}
    N_i$-module $V_i$ for every $i$ such that $V\simeq
    V_1\otimes_{\FF{q^k}}\ldots\otimes_{\FF{q^k}} V_t$.  Furthermore,
    $N=N_1\otimes N_2\otimes \ldots\otimes N_t$ and $H=H_1\otimes
    H_2\otimes\ldots\otimes H_t$ where $N_i\nor H_i\leq GL(V_i(q^k))$
    for every $i$.
  \item If $N_i/Z$ is Abelian, then $N_i=ZR_i$ where $R_i\leq N_i$ is
    an extraspecial $r_i$-group for some prime $r_i$ of order
    $r_i^{2l_i+1}$.  Furthermore, $|N_i/Z|=r_i^{2l_i}$ and
    $\dim_{\FF{q^k}}(V_i)=r_i^{l_i}$. 
  \item If $N_i/Z$ is a direct product of $s$ many isomorphic
    non-Abelian simple groups, then there is a tensor product
    decomposition $V_i=W_1\otimes \ldots\otimes W_s$ preserved by
    $N_i$.  Then $N_i=K_1\otimes \ldots\otimes K_s$ where $K_i=S_iZ$
    for each $i$, and the $S_i\leq GL(W_i)$ are isomorphic quasisimple
    absolutely irreducible groups. Finally, $H_i$ permutes the $K_i$-s
    and the $W_i$-s in a transitive way.
  \end{enumerate}
\end{thm}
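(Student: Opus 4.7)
The plan is to prove the eight items in sequence, with each step exploiting the maximality of $G$ as a coprime subgroup of $GL(V)$ to promote containments into equalities. The starting point is Schur's lemma applied to the primitive (hence irreducible) $\FF q G$-module $V$: the endomorphism ring $\End_{\FF q G}(V)$ is a finite division ring, so by Wedderburn it is a field, which I will write as $\FF{q^k}$. Its multiplicative group embeds in $GL(V)$ as the candidate for $Z$ and has order coprime to $|V|$, so by maximality $Z\leq G$. For the \emph{unique maximal} claim in (1), I would take any Abelian $A\leq GL(V)$ normalized by $G$ and argue that $V|_A$ is homogeneous (Clifford plus primitivity), which embeds $A$ in a commutative subring of $\End_{\FF q}(V)$ landing inside $\End_{\FF q G}(V)=\FF{q^k}$, yielding $A\leq Z$. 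Items (2) and (3) are then immediate: $k\mid n$ since $V$ is now an $\FF{q^k}$-space of dimension $d=n/k$, and each $g\in G$ normalizes $Z$ and so acts $\sigma_g$-semilinearly for some $\sigma_g\in\Gal(\FF{q^k}/\FF q)$, placing $G$ inside $\Gamma L(d,q^k)$. Item (4) falls out by setting $H:=\ker(g\mapsto\sigma_g)=C_G(Z)=G\cap GL(d,q^k)$.

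For (5), the first step is to upgrade the irreducibility of $V$: since $H\nor G$, the field $\End_{\FF{q^k}H}(V)$ is normalized by $G$, so by the uniqueness established in (1) its multiplicative group is contained in $Z$, forcing $\End_{\FF{q^k}H}(V)=\FF{q^k}$ and hence $V$ absolutely irreducible as an $\FF{q^k}H$-module. For the structure of $F^*(H)=N$, I would examine $O_r(H)$ for each prime $r\mid |F(H)|$: Clifford theory together with primitivity forces $O_r(H)$ to be of central-product type modulo $Z$, namely the central product of $Z\cap O_r(H)$ with an extraspecial $r$-group (this is the standard "symplectic type" analysis). The layer $E(H)$ is by definition a central product of quasisimple groups. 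The self-centralizing property of the generalized Fitting subgroup (namely $C_H(F^*(H))\leq F^*(H)$) then upgrades the containment $\soc(H/Z)\leq N/Z$ to equality.

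Items (6)--(8) follow a uniform tensor-product pattern. The minimal normal subgroups $N_1/Z,\ldots,N_t/Z$ of $H/Z$ pairwise commute and have pairwise trivial intersection, so in $H$ we have $[N_i,N_j]\leq Z$ and $N_i\cap N_j=Z$; since $Z$ is central in $H$ this yields a genuine central product structure $N=N_1\cdot_Z\cdots\cdot_Z N_t$. Combining this with the absolute irreducibility of $V$ and standard central-product representation theory gives $V=V_1\otimes_{\FF{q^k}}\cdots\otimes_{\FF{q^k}}V_t$ with each $V_i$ absolutely irreducible under $N_i$, and a corresponding decomposition $H=H_1\otimes\cdots\otimes H_t$ with $N_i\nor H_i\leq GL(V_i)$. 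For (7), when $N_i/Z$ is Abelian the commutator descends to a nondegenerate alternating pairing on the elementary Abelian $N_i/Z$, forcing $N_i=ZR_i$ for an extraspecial $r_i$-group and yielding the dimension formula $\dim V_i=r_i^{l_i}$. For (8), when $N_i/Z\cong S^s$ with $S$ non-Abelian simple, the same tensor-product mechanism applied to the $s$ simple components gives $V_i=W_1\otimes\cdots\otimes W_s$ with each $K_j=S_jZ$ quasisimple absolutely irreducible on $W_j$; the transitivity of the $H_i$-action on the $K_j$ and $W_j$ follows from the transitivity of $H_i$ on the simple components of $N_i/Z$.

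The main technical obstacle I anticipate is the tensor product decomposition at the start of (6) over the non-algebraically closed field $\FF{q^k}$: the key input is that a central product of subgroups, each with an absolutely irreducible action on a shared absolutely irreducible module, forces a compatible tensor-product decomposition of that module. This is classical over algebraically closed fields but here requires the absolute irreducibility that was the payoff of (5), together with a careful comparison of commutants and dimensions. A secondary subtlety running through (6)--(8) is ensuring that the factor modules $V_i$ (and later $W_j$) remain defined over $\FF{q^k}$ rather than dropping to a subfield, so that the subsequent analysis of the extraspecial and quasisimple pieces can be carried out in a uniform way.
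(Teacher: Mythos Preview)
Your approach to item (1) has a genuine gap. Defining $Z=\End_{\FF qG}(V)^\times$ gives only the elements of $GL(V)$ that \emph{centralise} $G$, not the maximal Abelian subgroup \emph{normalised} by $G$, and these can differ. Concretely, take $G=\Gamma L(1,q^n)\leq GL(n,q)$ (with $(n,p)=1$ so that $G$ is coprime). Then $\End_{\FF qG}(V)=\FF q$, since the Galois part of $G$ commutes only with $\FF q$-scalars; yet $\FF{q^n}^\times\nor G$ is a strictly larger Abelian subgroup normalised by $G$. So your $Z$ is not maximal, and the subsequent claim ``$V|_A$ homogeneous $\Rightarrow$ $A\hookrightarrow\End_{\FF qG}(V)$'' is false: homogeneity says nothing about $A$ commuting with $G$. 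The paper proceeds in the opposite direction: it \emph{chooses} $Z$ maximal among Abelian subgroups normalised by $G$, shows $Z\cup\{0\}\simeq\FF{q^k}$ via homogeneity of $V|_Z$, and then proves uniqueness by a Galois commutator computation. Namely, for any Abelian $B\nor G$, a generator $\alpha$ of $Z$, and $b\in B$, one has $b\alpha b^{-1}=\alpha^{q^s}$; since $[b,\alpha]=\alpha^{q^s-1}\in B$ is centralised by $b$, one deduces $(q^s-1)^2\equiv 0\pmod{q^k-1}$, forcing $s=0$. Thus $B\leq C_G(Z)$, so $BZ$ is Abelian normal and $B\leq Z$.

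There is a second, smaller gap in (5): you argue absolute irreducibility of $V$ over $H$ via $\End_{\FF{q^k}H}(V)$ and the uniqueness of $Z$, which is fine once (1) is repaired, but the theorem asserts absolute irreducibility over $N=F^*(H)$. Your argument does not transfer directly, because a priori $V|_N$ could be reducible, in which case $\End_{\FF{q^k}N}(V)$ is a matrix algebra rather than a field and the appeal to (1) breaks down. The paper handles this separately: the irreducible $N$-constituents are absolutely irreducible (else the centre of $C_{GL(V)}(N)$ would be a larger field normalised by $G$), and if $V\simeq U^{\oplus s}$ with $s\geq 2$ then one obtains a tensor decomposition $V=U\otimes W$ with $N\leq GL(U)\otimes 1_W$; the projection of $H$ to $GL(W)$ is then a coprime group normalised by $G$ and centralising $N$, contradicting either the maximality of $G$ or $C_H(F^*(H))\leq Z$. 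The remaining items (6)--(8) in your outline match the paper's use of the standard central-product/tensor machinery.
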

\begin{proof}
  Let $A\leq GL(V)$ be any Abelian subgroup normalised by $G$ and $P$
  is the (unique) Sylow-$p$ subgroup of $A$ for
  $p=\operatorname{char}(\FF q)$. Then $P$ is normalised by $G$. Then
  $0\neq \Fix_V(P)=\cap_{p\in P}\Fix_V(p)\leq V$ is
  $G$-invariant. Since $V$ is an irreducible $\FF qG$-module, we get
  that $P=1$, so $|A|$ is coprime to $|V|$. Therefore, $GA\geq G$ is a
  coprime linear group, so $A\leq G$ by the maximality of $G$ and part
  of (1) is proved.

  Let $Z\leq GL(V)$ be a maximal Abelian subgroup normalised by
  $G$. By the previous paragraph, $Z\nor G$. Since $G\leq GL(V)$ is
  primitive linear, $V$ is a homogeneous $\FF q Z$-module. If
  $V=V_1\oplus\ldots\oplus V_d$ is a decomposition of $V$ into
  (isomorphic) irreducible $\FF q Z$-modules, then $Z\simeq
  Z_{V_i}\leq \End_Z(V_i)\simeq \FF {q^k}$ for some $k\geq 1$ by using
  Schur Lemma. Then $\langle Z\rangle_{\FF q}$ (the subalgebra of
  $\End(V)$ generated by $Z$) is isomorphic to the field $\FF q^k$,
  and it is invariant under the conjugation by elements of $G$. It
  follows that $\langle Z\rangle_{\FF q}\setminus\{0\}\simeq \FF q^*$
  is an Abelian subgroup of $GL(V)$ normalised by $G$. Therefore, (2)
  follows by the maximality of $Z$.
  
  Identifying $Z\cup 0\leq \End(V)$ with $\FF q^k$, it
  defines an $\FF {q^k}$ vector space structure on $V$.
  The conjugation action of $G$ on $Z\cup\{0\}=\FF {q^k}$ defines a
  homomorphism $\sigma:G\mapsto \Gal(\FF{q^k},\FF q)$. Now, for any
  $g\in G,\ \alpha\in \FF q^k$ and $v\in V$ we have $g(\alpha
  v)=(g\alpha q^{-1})g(v)=\alpha^{\sigma(g)}(v)$, so $G$ is included
  into the semilinear group $\Gamma L(V_d(q^k))=\Gamma L(d,q^k)$.  The
  subgroup $H$ is just the kernel of $\sigma$, so (4) and part
  of (3) follows.

  Let $B\nor G$ be any Abelian normal subgroup, $\alpha\in Z$ a
  generator of $Z$ and $b\in B$. Then $b\alpha
  b^{-1}=\alpha^{\sigma(b)}=\alpha^{q^s}$ for some $0\leq s<k$, so
  $[b,\alpha]=\alpha^{q^s-1}\in B$ is centralised by $b$.  Changing
  $b$ to $b^{-1}$ if necessary, we can assume that $0\leq s\leq k/2$.
  This means $(\alpha^{q^s-1})^{q^s}=\alpha^{q^s-1}$, so $q^k-1\mid
  (q^s-1)^2<q^k-1$. Therefore, $s=0$. Thus, $B\leq C_G(Z)$, so $BZ\geq
  Z$ is an Abelian normal subgroup in $G$. By the maximality of $Z$, we get 
  $B\leq Z$, which completes the proof of both (1) and (3).

  Let $M=F(H)$ be the Fitting subgroup of $H$. Then $Z(M)$ is an
  Abelian normal subgroup of $G$, so $Z(M)=Z$ by the maximality of
  $Z$.  Let $n$ by the nilpotency class of $M$. If $n=1$ then $M=Z$.
  Otherwise, we claim that $n=2$.  Assuming that $n\geq 3$, we have
  $1\neq \gamma_{n}(M)\leq Z$, and $[\gamma_{n-1}(M),\gamma_{n-1}(M)]\leq
  [\gamma_2(M),\gamma_{n-1}(M)]\leq \gamma_{n+1}(M)=1$, so $\gamma_{n-1}(M)$
  is an Abelian normal subgroup of $G$, so it must contained in $Z$.
  This forces $\gamma_n(M)=1$, a contradiction.  Therefore, $n\leq 2$,
  that is, $M/Z$ is Abelian. 

  Let $R$ be a Sylow-$r$-subgroup of $M$ for some prime $r$ dividing
  $|M/Z|$.  The commutator map defines a symplectic bilinear function
  from $R/Z$ into $Z(R)=R\cap Z$. Therefore, for any $x,y\in R$ we
  have $[x^r,y^r]=[x,y]^{r^2}=[x^{r^2},y]$.  If $r^s$ is the exponent
  of $R/(R\cap Z)$ for some $s\geq 2$, then $R^{r^{s-1}}Z$ is an
  Abelian normal subgroup of $G$, so $R^{r^{s-1}}\leq Z$, a
  contradiction. Thus, we get $R/(R\cap Z)$ is an elementary Abelian
  $r$-group.  Using this and the above commutator identity it also
  follows that $R'\leq Z$ is of exponent $r$. It follows that
  $R=(R\cap Z)R_0$ for some extraspecial $r$-group $R_0$.

  Be the previous two paragraphs, $F(H)/Z$ is exactly the direct
  product of the minimal Abelian normal subgroups of $H$, so $F(H)/Z$
  is contained in $\soc(H/Z)$. Since $N=F^*(H)$ is the central product
  of $F(H)$ and the layer $E(H)$, where $E(H)/Z$ is the direct product
  of the minimal non-Abelian normal subgroups of $H/Z$ it follows that
  $N/Z=\soc(H/Z)$ as claimed.  By \cite[Lemma 12.1]{Giudici},
  $V_d(q^k)$ is an absolutely irreducible $\FF {q^k}H$-module.  If the
  irreducible $\FF {q^k}N$-components of $V_d(q^k)$ were not be
  absolutely irreducible, then $Z(C_{GL(V_d(q^k))}(N))$ would be the
  multiplicative group of a proper field extension of $\FF {q^k}$
  normalised by $G$, which again contradicts with the maximility of
  $Z$. Now, let us assume that $V_d(q^k)=U\oplus\ldots\oplus U$ is a
  direct sum of $s$ many isomorphic absolutely irreducible $\FF
  {q^k}N$-modules for some $s\geq 2$. By \cite[Lemma
  4.4.3(ii)]{Kleidman}, there is a tensor product decomposition
  $U\otimes_{\FF{q^k}}W$ of $V_d(q^k)$ such that $N\leq GL(U)\otimes
  1_W\leq GL(U)\otimes GL(W)$ and $G\leq N_{\Gamma L(V)}(N)\leq
  N_{\Gamma L(V)}(GL(U)\otimes GL(W))$. Let $L=\{1_U\otimes
  h_W\,|\,\exists h_U\in GL(U)\textrm{ such that }h_U\otimes h_W\in
  H\}$. If $L=Z$, then $V_d(q^k)$ is not irreducible as an $\FF
  {q^k}H$-module, a contradiction.  We have $L\leq GL(V)$ is a coprime
  linear group normalised by $G$, so $LG\leq GL(V)$ is coprime. Using
  the maximality of $G$ we get that $L<G$. But then $Z< L\leq H$
  clearly centralises $N=F^*(H)$, a contradiction. So, $V_d(q^k)$ is
  an absolutely irreducible $\FF{q^k}N$-module, and (5) is proved.
  Now, (6) follows by a combined use of \cite[Corollary
  18.2/(a)]{MalleTesterman} and \cite[Lemma 4.4.3(iii)]{Kleidman}.

  If $N_i/Z$ is Abelian, then it is a minimal Abelian normal subgroup
  of $H/Z$ so it is elementary Abelian $r_i$-group for some prime
  $r_i$. Using the same argument as in paragraph 6 of this proof, one
  can find the extraspecial $r_i$-group $R_i$ by taking the full
  inverse image of a maximal non-degenerate subspace of $R/R'$ where
  $R$ is the Sylow-$r_i$ subgroup of $N_i$. For this subgroup, it
  clearly follows that $N_i=ZR_i$, and $|R_i|=r_i^{2l_i+1}$ for some
  integer. Furthermore, since $V_i$ is an absolutely irreducible
  $\FF{q^k}N_i$-module, it must be an absolutely irreducible
  $\FF{q^k}R_i$-module. It is well-known that extraspecial $r_i$-group of order 
  $r_i^{2l_i+1}$ has a unique faithfull absolutely irreducible 
  ordinary representation, and this representation has degree $r_i^{l_i}$, 
  which finishes the proof of (7). 

  Finally, (8) can again be deduced from \cite[Corollary
  18.2/(a)]{MalleTesterman} and from the fact that $N_i/Z$ is a
  minimal normal subgroup in $H_i/Z$.
\end{proof}
\begin{lem}\label{lem:commutator_supp}
  Let $G$ be a group, $K$ be a field and 
  let $V$ be an arbitrary finite dimensional $KG$-module.
  \begin{enumerate}
  \item For any $g,h\in G$ we have $\supp([g,h])\leq 2\supp(g)$.
  \item If $N\nor G$ such that $V$ is absolutely irreducible as a
    $KN$-module, then $\msupp(N)\leq 2\msupp(G)$.
  \end{enumerate}
\end{lem}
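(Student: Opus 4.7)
The plan for part (1) is to exploit the fixed-point containment $\Fix(g)\cap h^{-1}\Fix(g)\subseteq \Fix([g,h])$. Indeed, if $v$ satisfies both $gv=v$ and $ghv=hv$, then a direct computation yields
\[
[g,h]v = g^{-1}h^{-1}ghv = g^{-1}h^{-1}(hv) = g^{-1}v = v.
\]
Since $h$ acts invertibly on $V$, the subspace $h^{-1}\Fix(g)$ has the same dimension as $\Fix(g)$, and the standard inequality on the dimension of the intersection of two subspaces gives
\[
\dim\Fix([g,h])\;\geq\;\dim\bigl(\Fix(g)\cap h^{-1}\Fix(g)\bigr)\;\geq\;2\dim\Fix(g)-\dim V.
\]
Rewriting this inequality in terms of supports is exactly the bound $\supp([g,h])\leq 2\supp(g)$.

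For part (2), I would fix a nontrivial $g\in G$ with $\supp(g)=\msupp(G)$ and split into two cases. If there is some $h\in N$ with $[g,h]\neq 1$, then normality of $N$ forces $g^{-1}h^{-1}g\in N$, whence $[g,h]=(g^{-1}h^{-1}g)\cdot h$ is a nontrivial element of $N$; applying part (1) gives $\supp([g,h])\leq 2\supp(g)=2\msupp(G)$, and the desired bound $\msupp(N)\leq 2\msupp(G)$ follows. Otherwise $g$ centralises $N$, and since $V$ is absolutely irreducible as a $KN$-module, Schur's lemma forces $g$ to act as a scalar on $V$. Because $g\neq 1$ this scalar is nontrivial, so $\supp(g)=\dim V$ and $\msupp(G)=\dim V$; then any $1\neq n\in N$ trivially satisfies $\supp(n)\leq\dim V\leq 2\msupp(G)$.

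There is no serious obstacle here: the heart of the argument is the one-line fixed-point identity in part (1), and part (2) reduces to a dichotomy whose nontrivial branch is solved directly by part (1). The only subtle points are that normality of $N$ is used precisely to place $[g,h]$ inside $N$, and that \emph{absolute} irreducibility (rather than mere irreducibility over $K$) is what allows Schur's lemma to identify the centraliser of $N$ in $\End_K(V)$ with the scalars $K\cdot\id_V$ in the second case.
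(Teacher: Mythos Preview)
Your proof is correct and essentially identical to the paper's. For part~(1) the paper uses $U=\Fix(g)$ and $W=\Fix(h^{-1}gh)$, which is exactly your $h^{-1}\Fix(g)$, and then the same dimension inequality; for part~(2) the paper runs the same dichotomy (centraliser of $N$ versus not), only phrased for an arbitrary $1\neq g\in G$ rather than just a minimiser, which makes no difference to the conclusion.
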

\begin{proof}
  Let us consider the subspaces $U=\Fix(g)$ and $W=\Fix(h^{-1}gh)$ of $V$. 
  Then we have 
  \[\dim (U)+\dim (W)-\dim(U\cap W)=\dim(U+W)\leq \dim (V).\]
  Using that $\dim (U)=\dim (W)=\dim (V)-\supp(g)$ we get 
  $\dim(U\cap W)\geq \dim (V)-2\supp(g)$. 
  On the other hand $U\cap W\leq \Fix([g,h])$ holds trivially,
  so 
  \begin{align*}
    \supp([g,h])&=\dim (V)-\dim(\Fix([g,h]))\leq \dim (V)-\dim(U\cap W)\\
    &\leq \dim(V)-(\dim (V)-2\supp(g))=2\supp(g),
  \end{align*}               
  and part (1) follows.  

  For part (2), let $1\neq g\in G$ be any element. If $[g,N]=1$, then
  $g$ acts as a scalar transformation on $V$ by \cite[Theorem
  9.2]{Ibook}, so $\supp(g)=\dim(V)\geq \msupp(N)$. Otherwise, there
  is an element $n\in N$ such that $[g,n]\neq 1$. Then we have $\msupp(N)\leq
  \supp([g,n])\leq 2\supp(g)$. Thus, $\msupp(N)\leq 2\supp(g)$ for
  every $1\neq g\in G$, which proves that $\msupp(N)\leq 2\msupp(G)$.
\end{proof}
\begin{lem}\label{lem:tensor_supp}
  Let $V_1,\ldots,V_k$ be finite dimensional vector spaces over the
  field $\FF q$ and $Z< G_1\leq GL(V_1),\ldots,Z< G_k\leq GL(V_k)$ be
  coprime linear groups.  Consider the group
  $G:=G_1\otimes\ldots\otimes G_k$ acting on the tensor product
  $V:=V_1\otimes\ldots\otimes V_k$ in a natural way.
  \begin{enumerate}
  \item Let $g=g_1\otimes\ldots\otimes g_k\in G$ with $g_j\in G_j$
    for each $j$ and let us assume that $g_i\notin Z$ for some $i$.
    Then 
    \[\supp_V(g)\geq \msupp_{V_i}(G_i)\cdot \frac{\dim(V)}{\dim(V_i)}\]
    or $\supp_V(g)\geq \frac{1}{2}\dim(V)$.
  \item As a consequence
    \[
    \msupp_V(G)=\min_i
    \Big\{\msupp_{V_i}(G_i)\cdot\frac{\dim(V)}{\dim(V_i)}\Big\},
    \]
    or $\msupp_V(G)\geq \frac{1}{2}\dim(V)$.
  \end{enumerate}
\end{lem}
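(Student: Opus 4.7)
The plan is to diagonalise each $g_j$ over the algebraic closure and express $\dim\Fix_V(g)$ as a sum of products of eigenspace dimensions. First I would extend scalars to $\CF q$, which does not change $\dim\Fix$; since each $G_j$ is coprime to $\operatorname{char}(\FF q)$, each $g_j$ is semisimple and hence diagonalisable over $\CF q$. Writing $V_j\otimes\CF q=\bigoplus_\lambda V_j^\lambda$ and $e_j(\lambda):=\dim V_j^\lambda$, the element $g_1\otimes\ldots\otimes g_k$ acts on $V_1^{\lambda_1}\otimes\ldots\otimes V_k^{\lambda_k}$ as the scalar $\lambda_1\cdots\lambda_k$, which yields the clean formula
\[
\dim\Fix_V(g)=\sum_{\lambda_1\cdots\lambda_k=1}\prod_j e_j(\lambda_j).
\]

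To bound this I would group the terms by the value of $\lambda_i$: for each fixed $\lambda_i$ the constrained inner sum over the remaining indices is bounded above by the unconstrained product $\prod_{j\neq i}d_j$, where $d_j:=\dim V_j$. Setting $m_i:=\max_{\lambda\in\CF q^\times}e_i(\lambda)$, this gives
\[
\dim\Fix_V(g)\leq m_i\cdot\prod_{j\neq i}d_j,\quad\text{hence}\quad \supp_V(g)\geq (d_i-m_i)\cdot\prod_{j\neq i}d_j.
\]

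The main subtle point, and where the $\frac{1}{2}\dim V$ alternative enters, is the passage from $\CF q$ back to $\FF q$. If the dominant eigenvalue $\lambda^*$ achieving $e_i(\lambda^*)=m_i$ lies in $\FF q^\times$, then the remark following Definition \ref{def:minsupp} together with $g_i\notin Z$ gives $d_i-m_i\geq\msupp_{V_i}(G_i)$, yielding the first claimed bound. Otherwise $\lambda^*$ lies in a non-trivial Galois orbit over $\FF q$; since $g_i$ is $\FF q$-linear and semisimple, Galois-conjugate eigenvalues have equal-dimensional eigenspaces, and since there are at least two of them we deduce $2m_i\leq d_i$, which forces the fallback $\supp_V(g)\geq\frac{1}{2}\dim V$. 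I expect this $\CF q$-versus-$\FF q$ dichotomy to be the main technical point of the argument.

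For part (2), I would first observe that every element of $G=G_1\otimes\ldots\otimes G_k$ is a pure tensor (because $(g_1\otimes g_2)(h_1\otimes h_2)=(g_1h_1)\otimes(g_2h_2)$), and that $g_1\otimes\ldots\otimes g_k$ is scalar iff every $g_j$ is scalar (by comparing eigenvalues). Thus any $g\in G\setminus Z$ satisfies the hypothesis of part (1); taking the minimum over such $g$ gives $\msupp_V(G)\geq\min_i\{\msupp_{V_i}(G_i)\cdot\dim V/\dim V_i\}$ or $\msupp_V(G)\geq\frac{1}{2}\dim V$. For the matching upper bound in the first alternative I would pick a minimising index $i^*$ and an element $g_{i^*}\in G_{i^*}$ with $\supp_{V_{i^*}}(g_{i^*})=\msupp_{V_{i^*}}(G_{i^*})$; then $\id\otimes\ldots\otimes g_{i^*}\otimes\ldots\otimes\id\in G$ has fixed space $\bigotimes_{j\neq i^*}V_j\otimes\Fix_{V_{i^*}}(g_{i^*})$, hence support exactly $\msupp_{V_{i^*}}(G_{i^*})\cdot\dim V/\dim V_{i^*}$, closing the argument.
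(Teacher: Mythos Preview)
Your proposal is correct and follows essentially the same line as the paper's proof: diagonalise over $\CF q$, bound $\dim\Fix_V(g)$ by $m_i\cdot\prod_{j\neq i}d_j$, and then split into the two cases according to whether the dominant eigenvalue of $g_i$ lies in $\FF q$ (giving $d_i-m_i\geq \msupp_{V_i}(G_i)$) or not (Galois conjugates force $m_i\leq d_i/2$). The only cosmetic differences are that the paper first treats $k=2$ and appeals to induction, and it realises the step $d_i-m_i\geq \msupp_{V_i}(G_i)$ by explicitly replacing $g_i$ with $\lambda^{*-1}g_i$ (using $Z\leq G_i$) rather than by citing the remark after Definition~\ref{def:minsupp}; your direct treatment of general $k$ via the formula $\dim\Fix_V(g)=\sum_{\lambda_1\cdots\lambda_k=1}\prod_j e_j(\lambda_j)$ is arguably cleaner.
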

\begin{proof}
  To prove part (1), first we consider the case $k=2$. Let $n_1=\dim
  (V_1),\,n_2=\dim (V_2)$, so $n=\dim (V)=n_1n_2$. Furthermore, let $1\neq
  g=g_1\otimes g_2\in G_1\otimes G_2$ be an element of $G$ with
  $g_1\notin Z$.  Since the action is coprime, $g_1$ and $g_2$ are
  diagonalisable over $\CF q$. Let $\alpha_1,\ldots\alpha_s\in\CF q$
  be the different eigenvalues of $g_1$ with multiplicity
  $k_1,k_2,\ldots,k_s$. We can assume that $k_1$ is the largest among
  the $k_i$. Let $l_1,\ldots,l_s$ be the multiplicities of
  $\alpha_1^{-1},\ldots,\alpha_s^{-1}$ in the characteristic
  polynomial of $g_2$ (Some of them can be zero).  Then
  \begin{align*}
  \supp_V(g)&=\supp_V(g_1\otimes g_2)=n-\dim(\Fix_V(g_1\otimes g_2))\\
  &=n-\sum_{i=1}^s k_il_i\geq 
  n-\sum_{i=1}^s k_1l_i\geq (n_1-k_1)n_2.
  \end{align*}
  If $\alpha_1\in \FF q$, then we can substitute $g_1$ by
  $\alpha_1^{-1}g_1$ and $g_2$ by $\alpha_1 g_2$ (since both $G_1$ and
  $G_2$ contains all the scalar transformations), so we can assume
  that $\alpha_1=1$. Now, since $g_1\neq 1$, we get $\supp_V(g)\geq
  (n_1-k_1)n_2 = \supp_{V_1}(g_1)n_2 \geq \msupp_{V_1}(G_1)\cdot
  \dim(V_2)$. 

  Now, let us assume that $\alpha_1\notin \FF q$. Then there is an
  algebraic conjugate element of $\alpha_1$ (different from
  $\alpha_1$) under the action of $\textrm{Gal}(\CF q,\FF q)$ which is
  also an eigenvalue of $g_1$ with the same multiplicty as $\alpha_1$.
  In particular, $k_1\leq n_1/2$.
  Thus, \[\supp_V(g)\geq (n_1-k_1)n_2\geq (n_1/2)n_2=\frac{\dim(V)}2.\]
  By changing the role of $g_1$ and $g_2$ in the proof and
  by using induction on $k$, we get the claim of part (1).

  Finally, if
  $\supp_{V_i}(g_i)=\msupp_{V_i}(G_i)$ for some $g_i\notin Z$, then
  \[\supp_{V}(1\otimes\ldots \otimes 1\otimes g_i\otimes 1\otimes\ldots 
  \otimes 1)=\msupp_{V_i}(G_i)\cdot\frac{\dim(V)}{\dim(V_i)},
  \]
  so part (2) follows by using part (1).
\end{proof}
\begin{proof}[Proof of Theorem \ref{thm:randombase}]
  Let $G\leq GL(V)$ be a coprime primitive linear group.
  Without loss of generality we can
  assume that $G$ is maximal among such subgroups of $GL(V)$.
  Let $Z$ be the
  unique maximal Abelian subgroup in $GL(V)$ which is normalised by
  $G$ and $H$ be the intersection of $G$ and $GL(d,q^k)$ as in Theorem
  \ref{thm:generalcase}.
  If $g\in G\setminus H$ then there is a $z\in
  Z$ such that $[g,z]\neq 1$. 
  By Lemma \ref{lem:commutator_supp},
  $\supp_V(g) \geq \frac{1}{2}\supp_V([g,z])=\frac{1}{2}\dim(V)$.
  Therefore if $c>4$, then
  \[\sum_{g\in G\setminus H}\frac{1}{q^{c \cdot \supp_V(g)}}\leq
  \frac{|G\setminus H|}{q^{\frac{c}{2} \dim(V)}}
  \leq \frac{|V|^2}{|V|^{\frac{c}{2}}}\leq \frac{1}{|V|^{\frac{c}{2}-2}}
  .\]

  Now let $g$ be an element of $H=H_1\otimes \ldots\otimes H_t$.  So
  $g=(g_1,\ldots,g_t)$ where $g_i\in H_i$ for all $i\in[t]$ and $g$
  preserves the tensor product decomposition $V=
    V_1\otimes\ldots\otimes V_t$ over $\FF q^k$ as in Theorem
  \ref{thm:generalcase} (6) and $\dim_{\mathbb{F}_{q^k}}(V_i)=d_i$ for
  all $i$ (therefore $d=\dim_{\FF{q^k}}(V)=\prod_{i=1}^t d_i$ and
  $\dim(V)=\dim_{\FF q}(V)=k\cdot \prod_{i=1}^t d_i$).  We can assume that in
  this decomposition the dimensions of the vector spaces are
  decreasing, i.e. $d_1\geq d_2\geq \ldots \geq d_t \geq 2$.  Let
  $g_i\notin Z$ for some $i\neq 1$.  Then by Lemma
  \ref{lem:tensor_supp}, 
  \[
  \supp_{V}(g)\geq \msupp_{V_i}(H_i)\cdot
  \frac{\dim(V)}{\dim(V_i)}\geq k \prod_{j\neq i}d_j.
  \] 
  Since 
  $2\prod_{j\neq i}d_j\geq 2^{t-1}d_1\geq \sum_{i=1}^t d_1\textrm{ and } 
  k\prod_{j\neq i}d_j\geq k\sqrt{\prod_{j=1}^t d_j}\geq \sqrt{\dim(V)}$
  we get that
  \[ 
  c\cdot \supp_{V}(g) \geq 2k \sum_{i=1}^t d_i + (c-4) \sqrt{\dim(V)}.
  \]
  Hence,
  \begin{align*}
  \sum_{g\in H\setminus H_1}\frac{1}{q^{c\cdot \supp_V(g)}}&
  \leq \frac{\prod_{i=1}^t|H_i|}{q^{2k \sum_{i=1}^t d_i + (c-4) \sqrt{\dim(V)}}}\\
  &\leq \frac{q^{2k \sum_{i=1}^t d_i}}{q^{2k \sum_{i=1}^t d_i + (c-4)
      \sqrt{\dim(V)}}}=\frac{1}{q^{(c-4)\sqrt{\dim(V)}}}.
  \end{align*}
  
  Now assume that $g\in H_1$.  In this case
  $\supp_V(g)=\supp_{V_1}(g)\cdot \frac{d}{d_1}$.  By
  Theorem \ref{thm:generalcase}, $Z\leq N_1\leq H_1\leq GL(V_1(q^k))$
  where $N_1$ is a minimal normal subgroup above $Z$ and $N_1/Z$ is
  characteristically simple.  Therefore, it is either an
  elementary Abelian group,  a direct product of non-Abelian simple groups,
  or a non-Abelian simple group. 
  
  First, if $N_1/Z$ is elementary Abelian, then $N_1=Z\cdot P$ where $P$ is
  an extraspecial $r$-group for a prime $r$ with $r\mid q^k-1$.  Then
  $V_1(q^k)$ is an absolutely irreducible $\FF{q^k}P$-module.  If
  $n\in P\setminus Z$ then $n$ has exactly $r$ different eigenvalues
  on $V_1$ (or on $V$) each with the same multiplicity.  
  It follows that $\msupp_V(N_1)\geq \frac{r-1}{r}\dim(V)\geq
  \frac{1}{2}\dim(V)$, so $\msupp_{V}(H_1)\geq \frac{1}{4}\dim(V)$ by Lemma 
  \ref{lem:commutator_supp}.  In this case,
  \[\sum_{g\in H_1} \frac{1}{q^{c\cdot \supp_V(g)}}
  \leq \frac{|H_1|}{q^{\frac{c}{4} \dim(V)}}
  \leq \frac{|V|^2}{q^{2\dim(V)+(\frac{c}4-2)\dim(V)}}
  \leq \frac{1}{|V|^{\frac{c}4-2}}.\]
  
  Next, let $N_1/Z$ is a direct product of $s\geq 2$ many isomorphic
  non-Abelian simple groups. By Theorem \ref{thm:generalcase} (8), the
  action of $N_1=K_1\otimes\ldots\otimes K_s$ on $V_1$ preserves a
  tensor product decomposition $V_1=W_1\otimes \ldots \otimes W_s$
  over $\FF q^k$ , where $\dim_{\FF{q^k}}(W_i)=\sqrt[s]{d_1}\geq 2$ for
  every $i$.  Using \cite[Theorem 1]{P3-Pyber}, we get that 
  \[
  |N_1|\leq \prod_{i=1}^s|K_s|\leq \prod_{i=1}^s |W_i|^2=q^{2ks\sqrt[s]{d_1}}.
  \]
  On the other hand, $H_1/N_1$ acts faithfully on $\{W_1,\ldots,W_s\}$ and 
  $|H_1/N_1|$ is coprime to $q$, so $|H_1/N_1|\leq q^s$ by 
  \cite[Corollary 2.4]{HP}. Therefore, $|H_1|\leq q^{2ks\sqrt[s]{d_1}+s}$.
  By Lemma \ref{lem:commutator_supp} and by Lemma \ref{lem:tensor_supp},
  \[
  \supp_{V_1}(g) \geq \frac{1}{2}\msupp_{V_1}(N_1)\geq \frac{k}{2}\cdot
  \frac{d_1}{\sqrt[s]{d_1}}.
  \]  
  Therefore, 
  \[
  c\supp_V(g)\geq
  5k{d_1}^{(s-1)/s}+\Big(\frac{c}2-5\Big)k\sqrt{d_1}\cdot\frac{d}{d_1}
  \geq 2ks\sqrt[s]{d_1}+s+\Big(\frac{c}2-5\Big)\sqrt{\dim(V)}.
  \]
  So,
  \begin{align*}
    \sum_{1\neq g\in H_1}\frac{1}{q^{c\cdot\supp_V(g)}}
    &\leq \frac{|H_1|}{q^{c\cdot\msupp_V(H_1)}}
    \leq \frac{q^{2ks\sqrt[s]d_1+s}}{q^{2ks\sqrt[s]{d_1}+s+
        (\frac{c}2-5)\sqrt{\dim(V)}}}\\
    &\leq \frac{1}{q^{(\frac{c}{2}-5)\sqrt{\dim(V)}}}.
  \end{align*}
  Finally, let $N_1/Z$ be a non-Abelian simple group.  If $d_1\leq
  \sqrt{d}$, then we can use the same argument as in the previous paragraph
  to get that
  \[
  \sum_{1\neq g\in H_1}\frac{1}{q^{c\cdot\supp_V(g)}}\leq 
  \frac{1}{q^{(c-2)\sqrt{\dim(V)}}}.
  \]
  Summarizing the bounds given until this point, we get 
  that 
  \begin{align*}
  Pb(c,G,V)&\geq 1-\sum_{1\neq g\in G}\frac{1}{q^{c\cdot\supp_V(g)}}
    \geq 1-\Big(\frac{1}{|V|^{\frac{c}2-2}}+\frac{1}{q^{(c-4)\sqrt{\dim(V)}}}\\
    &+\frac{1}{q^{(\frac{c}2-5)\sqrt{\dim(V)}}}\Big)
    \geq 1-\frac{3}{q^{(\frac{c}2-5)\sqrt{\dim(V)}}},
  \end{align*}
  which is case (1) of Theorem \ref{thm:randombase}.

  Now, let us assume that $d_1\geq \sqrt{d}$. If $|V_1|=q^{kd_1}$ is
  bounded by the constant appearing in part (3) of Theorem
  \ref{thm:quasisimple}, then $|V|$ is also bounded. Hence we can
  assume that either part (1) or part (2) of Theorem
  \ref{thm:quasisimple} holds.  By Lemma \ref{lem:commutator_supp}, we
  also have $\msupp_V(H_1)\geq \frac{1}2\msupp_V(N_1)$.  

  If $N_1/Z$ is not
  an alternating group, then $\msupp_V(N_1)\geq \frac{1}{40}\dim(V)$
  and $5\cdot\msupp_{V}(N_1)\geq \log_q|H_1|$ by using Corollary
  \ref{cor:msupp-dim_bound}, Theorem \ref{thm:quasisimple}/(1) and
  Lemma \ref{lem:tensor_supp}/(2).
  Thus, we have 
  \[
  \sum_{1\neq g\in H_1}\frac{1}{q^{c\cdot\supp_V(g)}}\leq 
  \frac{1}{|V|^{(c-10)/80}}.
  \]
  So, in this case we get that
  \begin{align*}
    Pb(c,G,V)&
    \geq 1-\Big(\frac{1}{|V|^{\frac{c}2-2}}+\frac{1}{q^{(c-4)\sqrt{\dim(V)}}}+
    \frac{1}{|V|^{(c-10)/80}}\Big)\\
    &\geq 1-\Big(\frac{1}{q^{(c-4)\sqrt{\dim(V)}}}+
    \frac{2}{|V|^{(c-10)/80}}\Big),
  \end{align*}
  which is case (2)/a of Theorem \ref{thm:randombase}.

  Finally, let $N_1/Z\simeq A_m$ for some $m$.  If $V_1$ is not an
  irreducible component of the natural $\FF q^k A_m$ permutation
  module, then we have $\msupp_{V}(N_1)\geq
  \frac{1}{16}\sqrt{\dim(V)}$ and $5\cdot\msupp_{V}(N_1)\geq
  \log_q|H_1|$ by using Corollary \ref{cor:msupp-dim_symmetric},
  Theorem \ref{thm:quasisimple}/(1) and Lemma
  \ref{lem:tensor_supp}/(2).
  Thus, we have 
  \[
  \sum_{1\neq g\in H_1}\frac{1}{q^{c\cdot\supp_V(g)}}\leq 
  \frac{1}{q^{\frac{c-10}{16}\sqrt{\dim(V)}}}
  \]
  and 
  \begin{align*}
    Pb(c,G,V)&
    \geq 1-\Big(\frac{1}{|V|^{\frac{c}2-2}}+\frac{1}{q^{(c-4)\sqrt{\dim(V)}}}+
    \frac{1}{q^{\frac{c-10}{16}\sqrt{\dim(V)}}}\Big)\\
    &\geq 1-\frac{3}{q^{\frac{c-10}{16}\sqrt{\dim(V)}}}.
  \end{align*}
  Finally, if $V_1$ is the non-trivial irreducible component of the
  natural $\FF q^kA_m$-module, then with the use of Corollary
  \ref{cor:sym_deleted_perm} we get that
  \[
    Pb(c,G,V)\geq
    1-\Big(\frac{1}{|V|^{\frac{c}2-2}}+\frac{1}{q^{(c-4)\sqrt{\dim(V)}}}+
    1-Pb(c,H_1,V)\Big)\geq 1-\frac{3}{n^{c-2}},
  \]
  which completes the proof of Theorem \ref{thm:randombase}.
\end{proof}

\end{document}